\newtheorem*{maintheorem*}{Main Theorem}
\newtheorem{theorem}{Theorem}[section]
\newtheorem{prop}[theorem]{Proposition}
\newtheorem{lemma}[theorem]{Lemma}
\newtheorem{cor}[theorem]{Corollary}
\theoremstyle{definition}
\newtheorem{definition}[theorem]{Definition}
\newtheorem{remark}[theorem]{Remark}
\newtheorem{example}[theorem]{Example}
\numberwithin{equation}{section}
\newcommand{\nn}{\mathbb{N}}
\newcommand{\qq}{\mathbb{Q}}
\newcommand{\rr}{\mathbb{R}}
\newcommand{\zz}{\mathbb{Z}}
\newcommand\pval{\mathsf{v}_p}
\keywords{Puiseux monoids, transfer homomorphisms, finitely generated monoids, strongly primary monoids, Krull monoids, C-monoids}
\begin{document}
	
	\mbox{}
	\title{Puiseux monoids and transfer homomorphisms}
	\author{Felix Gotti}
	\address{Department of Mathematics\\UC Berkeley\\Berkeley, CA 94720}
	\email{felixgotti@berkeley.edu}
	\date{\today}
	
	\begin{abstract}
		There are several families of atomic monoids whose arithmetical invariants have received a great deal of attention during the last two decades. The factorization theory of finitely generated monoids, strongly primary monoids, Krull monoids, and C-monoids are among the most systematically studied. Puiseux monoids, which are additive submonoids of $\qq_{\ge 0}$ consisting of nonnegative rational numbers, have only been studied recently. In this paper, we provide evidence that this family comprises plenty of monoids with a basically unexplored atomic structure. We do this by showing that the arithmetical invariants of the well-studied atomic monoids mentioned earlier cannot be transferred to most Puiseux monoids via homomorphisms that preserve atomic configurations, i.e., transfer homomorphisms. Specifically, we show that transfer homomorphisms from a non-finitely generated atomic Puiseux monoid to a finitely generated monoid do not exist. We also find a large family of Puiseux monoids that fail to be strongly primary. In addition, we prove that the only nontrivial Puiseux monoid that accepts a transfer homomorphism to a Krull monoid is $\nn_0$. Finally, we classify the Puiseux monoids that happen to be C-monoids.
	\end{abstract}

\maketitle

\section{Introduction} \label{sec:intro}

The study of the phenomenon of non-unique factorizations in the ring of integers $\mathcal{O}_K$ of an algebraic number field $K$ was initiated by L. Carlitz in the 1950's, and it was later carried out on more general integral domains. As a result, many techniques to measure the non-uniqueness of factorizations in several families of integral domains were systematically developed during the second half of the last century (see \cite{dA97} and references therein). However, it was not until recently that questions about the non-uniqueness of factorizations were abstractly formulated in the context of commutative cancellative monoids. This was possible because most of the factorization-related questions inside an integral domain are purely multiplicative in essence. The fundamental goal of abstract (or modern) factorization theory is to measure how far is a commutative cancellative monoid from being factorial by using different arithmetical invariants.

At this point, the arithmetical invariants of several families of atomic monoids have been intensively studied. Finitely generated monoids, strongly primary monoids, Krull monoids, and C-monoids are among the most studied. These families of monoids not only have very diverse arithmetical properties, but also have proved to be useful in the study of the factorization theory of less-understood atomic monoids via transfer homomorphisms. A monoid homomorphism is said to be transfer if somehow it allows to shift the atomic structure of its codomain back to its domain (see Definition~\ref{def:transfer homomorphism}). Therefore if one is willing to know the factorization invariants of a given monoid, it suffices to find a transfer homomorphism from such a monoid to a better-understood monoid and carry over the desired factorization properties.

Puiseux monoids were recently introduced as a rational generalization of numerical monoids. Many families of atomic Puiseux monoids were explored in \cite{fG16,fG17,GG17}, and their elasticity was studied in \cite{GO17}. However, it is still unanswered whether the non-unique factorization behavior in Puiseux monoids is somehow similar to that of some of the monoids whose factorization properties are already well-understood. To give a partial answer to this, we will determine which atomic Puiseux monoids can be the domain of a transfer homomorphism to some of the monoids whose arithmetical invariants have already been studied. In particular, we consider finitely generated monoids, Krull monoids, and C-monoids as our transfer codomains.

The content of this paper is organized as follows. In Section~\ref{sec:background}, we establish the notation we shall be using later, and we formally present most of the fundamental concepts needed in this paper. Then, in Section~\ref{sec:Morphisms between PMs}, we show that homomorphisms between Puiseux monoids can only be given by rational multiplication, which will allow us to characterize the transfer homomorphisms between Puiseux monoids. We also present a family of Puiseux monoids whose members have $\zz$ as their group of automorphisms. Section~\ref{sec:finite transfer PMs} is devoted to characterize the Puiseux monoids admitting a transfer homomorphism to some finitely generated monoid. Then, in Section~\ref{sec:finitary PM} we investigate which Puiseux monoids are strongly primary. Finally, in Section~\ref{sec:PM are not transfer Krull}, we prove that the only Puiseux monoid that is transfer Krull is the additive monoid $\nn_0$. We use this information to classify the Puiseux monoids which happen to be C-monoids.
\medskip

%%%%%%%%%%%%%%%%%%%%%%%%%%%%%
\section{Background} \label{sec:background}

To begin with let us introduce the fundamental concepts related to our exposition as an excuse to establish the notation we need. The reader can consult Grillet \cite{pG01} for information on commutative semigroups and Geroldinger and Halter-Koch \cite{GH06b} for extensive background in non-unique factorization theory of atomic monoids.

Throughout this sequel, we let $\mathbb{N}$ denote the set of positive integers, and we set $\nn_0 := \nn \cup \{0\}$. For $X \subseteq \rr$ and $r \in \rr$, we set $X_{\le r} := \{x \in X \mid x \le r\}$; with a similar spirit we use the symbols $X_{\ge r}$, $X_{< r}$, and $X_{> r}$. If $q \in \qq_{> 0}$, then we call the unique $a,b \in \nn$ such that $q = a/b$ and $\gcd(a,b)=1$ the \emph{numerator} and \emph{denominator} of $q$ and denote them by $\mathsf{n}(q)$ and $\mathsf{d}(q)$, respectively. For each subset $Q$ of $\qq_{>0}$, we call the sets $\mathsf{n}(Q) = \{\mathsf{n}(q) \mid q \in Q\}$ and $\mathsf{d}(Q) = \{\mathsf{d}(q) \mid q \in Q\}$ the \emph{numerator set} and \emph{denominator set} of $Q$, respectively.

As usual, a \emph{semigroup} is a pair $(S, *)$, where $S$ is a set and $*$ is an associative binary operation in $S$; we write $S$ instead of $(S,*)$ provided that $*$ is clear from the context. However, inside the scope of this paper, a \emph{monoid} is a commutative cancellative semigroup with identity (cf. the standard definition of monoid). To comply with established conventions, we will be using simultaneously additive and multiplicative notations; however, the context will always save us from the risk of ambiguity. Let $M$ be a monoid written additively. We set $M^\bullet := M \! \setminus \! \{0\}$ and, as usual, we let $M^\times$ denote the set of units (i.e., invertible elements) of $M$. The monoid $M$ is \emph{reduced} if $M^\times = \{0\}$. For $a, b \in M$, we say that $a$ \emph{divides} $b$ \emph{in} $M$ if there exists $c \in M$ such that $b = a + c$; in this case we write $a \mid_M b$. An element $a \in M \! \setminus \! M^\times$ is an \emph{atom} if whenever $a = u + v$ for some $u,v \in M$, either $u \in M^\times$ or $v \in M^\times$. Atoms are the building blocks in factorization theory; this motivates the especial notation
\[
	\mathcal{A}(M) := \{a \in M \mid a \text{ is an atom of } M\}.
\]
For $S \subseteq M$, we let $\langle S \rangle$ denote the smallest submonoid of $M$ containing $S$, and we say that $S$ \emph{generates} $M$ if $M = \langle S \rangle$. The monoid $M$ is said to be \emph{finitely generated} if it can be generated by a finite set. On the other hand, we say that $M$ is \emph{atomic} if $M = \langle \mathcal{A}(M) \rangle$.
%An element $p \in M$ is \emph{prime} if for all $a, b \in M$ the fact that $p \mid_M ab$ implies that either $p \mid_M a$ or $p \mid_M b$.
A monoid is \emph{factorial} if every element can be written as a sum of primes. As every prime is an atom, every factorial monoid is atomic.

Let $\rho \subseteq M \times M$ be an equivalence relation on $M$, and let $[a]_\rho$ denote the equivalence class of $a \in M$. We say that $\rho$ is a \emph{congruence} if for all $a,b,c \in M$ such that $(a,b) \in \rho$ it follows that $(ca,cb) \in \rho$. Congruences are precisely the equivalence relations that are compatible with the operation of $M$, meaning that $M/\rho := \{[a]_\rho \mid a \in M\}$ is a commutative semigroup with identity (no necessarily cancellative). Two elements $a,b \in M$ are \emph{associates}, and we write $a \simeq b$, if $a = ub$ for some $u \in M^\times$. Being associates defines a congruence relation $\simeq$ on $M$, and $M_{\text{red}} := M/\!\!\simeq$ is called the \emph{associated reduced semigroup} of $M$.
%If $\phi \colon M \to N$ is a monoid homomorphism, then the map $\phi_{\text{red}} \colon M_{\text{red}} \to N_{\text{red}}$ defined by $\phi_{\text{red}}(aM^\times) = \phi(a)N^\times$ is also a monoid homomorphism.

We say that a multiplicative monoid $F$ is \emph{free abelian} with basis $P \subset F$ if every element $a \in F$ can be written uniquely in the form
\[
	a = \prod_{p \in P} p^{\pval(a)},
\]
where $\pval(a) \in \nn_0$ and $\pval(a) > 0$ only for finitely many elements $p \in P$. The monoid $F$ is determined by $P$ up to canonical isomorphism, so we shall also denote $F$ by $\mathcal{F}(P)$. By the fundamental theorem of arithmetic, the multiplicative monoid $\nn$ is free on the set of prime numbers. In this case, we can extend $\pval$ to $\qq_{\ge 0}$ as follows. For $r \in \qq_{> 0}$ let $\pval(r) := \pval(\mathsf{n}(r)) - \pval(\mathsf{d}(r))$ and set $\pval(0) = \infty$.
%The map $\pval \colon \qq_{\ge 0} \to \zz$, called the $p$-\emph{adic valuation} on $\qq_{\ge 0}$, satisfies the following two conditions:
%\begin{eqnarray}
%	&& \pval(rs) = \pval(r) + \pval(s) \ \ \text{for all} \ \ r,s \in \qq_{\ge 0}; \vspace{10pt} \\
%	&& \pval(r + s) \ge \min \{ \pval(r), \pval(s)\} \ \ \text{for all} \ \ r,s \in \qq_{\ge 0}. \vspace{3pt}
%\end{eqnarray}

The free abelian monoid on $\mathcal{A}(M)$, denoted by $\mathsf{Z}(M)$, is called the \emph{factorization monoid} of $M$, and the elements of $\mathsf{Z}(M)$ are called \emph{factorizations}. If $z = a_1 \dots a_n \in \mathsf{Z}(M)$ for some $n \in \nn_0$ and $a_1, \dots, a_n \in \mathcal{A}(M)$, then $n$ is the \emph{length} of the factorization $z$; the length of $z$ is denoted by $|z|$. The unique homomorphism
\[
	\phi \colon \mathsf{Z}(M) \to M \ \ \text{satisfying} \ \ \phi(a) = a \ \ \text{for all} \ \ a \in \mathcal{A}(M)
\]
is called the \emph{factorization homomorphism} of $M$. Additionally, for $x \in M^\bullet$,
\[
	\mathsf{Z}(x) := \phi^{-1}(x) \subseteq \mathsf{Z}(M)
\]
is the \emph{set of factorizations} of $x$. By definition, we set $\mathsf{Z}(0) = \{0\}$. Note that the monoid $M$ is atomic if and only if $\mathsf{Z}(x)$ is not empty for all $x \in M$. For each $x \in M$, the \emph{set of lengths} of $x$ is defined by
\[
	\mathsf{L}(x) := \{|z| : z \in \mathsf{Z}(x)\}.
\]
We say that the monoid $M$ is \emph{half-factorial} if $|\mathsf{L}(x)| = 1$ for all $x \in M$. On the other hand, if $\mathsf{L}(x)$ is a finite set for all $x \in M$, then we say that $M$ is a \emph{BF-monoid}. The \emph{system of sets of lengths} of $M$ is defined by
\[
	\mathcal{L}(M) := \{\mathsf{L}(x) \mid x \in M\}.
\]
The system of sets of lengths is an arithmetical invariant of atomic monoids that has received significant attention in recent years (see \cite{ACHP07,CGLM11} and the literature cited there).

A very special family of atomic monoids is that one comprising all \emph{numerical monoids}, cofinite submonoids of the additive monoid $\nn_0$. We say that a numerical monoid is \emph{proper} if it is strictly contained in $\nn_0$. Each numerical monoid has a unique minimal set of generators, which is finite. Moreover, if $\{a_1, \dots, a_n\}$ is the minimal set of generators for a numerical monoid $N$, then $\mathcal{A}(N) = \{a_1, \dots, a_n\}$ and $\gcd(a_1, \dots, a_n) = 1$. As a result, every numerical monoid is atomic and contains only finitely many atoms. The \emph{Frobenius number} of $N$, denoted by $F(N)$, is the minimum $n \in \nn$ such that $\zz_{> n} \subset N$. An introduction to numerical monoids can be found in \cite{GR09}.

An additive submonoid of $\qq_{\ge 0}$ is called a \emph{Puiseux monoid}. Puiseux monoids are a natural generalization of numerical monoids. However, the general atomic structure of Puiseux monoids drastically differs from that one of numerical monoids. Puiseux monoids are not always atomic; for instance, consider $\langle 1/2^n \mid n \in \nn\rangle$. On the other hand, if an atomic Puiseux monoid $M$ is not isomorphic to a numerical monoid, then $\mathcal{A}(M)$ is infinite. The atomic structure of Puiseux monoids has been studied in \cite{fG17} and \cite{GG17}, where several families of atomic Puiseux monoids were described.
\medskip

%However, a Puiseux monoid is atomic provided $0$ is not a limit point. We say that a Puiseux monoid $M$ is \emph{strongly bounded} if it can be generated by a set of positive rationals $S$ such that $\mathsf{n}(S)$ is bounded.

\section{Homomorphisms Between Puiseux Monoids} \label{sec:Morphisms between PMs}

In this section we present characterizations of homomorphisms and transfer homomorphisms between Puiseux monoids. Let us start by introducing the concept of a transfer homomorphism, which is going to play a central role in this paper.

\begin{definition} \label{def:transfer homomorphism}
	A monoid homomorphism $\theta \colon M \to N$ is said to be a \emph{transfer homomorphism} if the following conditions hold: \vspace{3pt} \\
	\indent (T1) $N = \theta(M) N^\times$ and $\theta^{-1}(N^\times) = M^\times$; \vspace{3pt} \\
	\indent (T2) if $\theta(a) = b_1 b_2$ for $a \in M$ and $b_1,b_2 \in N$, then there exist $a_1, a_2 \in M$ such that $a = a_1 a_2$ and $\theta(a_i) = b_i$ for $i \in \{1,2\}$.
\end{definition}

We proceed to characterize the homomorphisms between Puiseux monoids. This will immediately yield a characterization of those homomorphisms between Puiseux monoids that happen to be transfer homomorphisms.

\begin{prop} \label{prop:homomorphisms between PM}
	If $\phi \colon M \to N$ be a homomorphism between Puiseux monoids, then the following conditions hold.
	\begin{enumerate}
		\item There exists $q \in \qq_{\ge 0}$ such that $\phi(x) = qx$ for all $x \in M$, i.e., $\phi$ is given by rational multiplication.
		\vspace{3pt}
		\item The homomorphism $\phi$ is a transfer homomorphism if and only if it is surjective.
	\end{enumerate}
\end{prop}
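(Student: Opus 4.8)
The plan is to prove the two parts in turn, with part (1) carrying the conceptual weight and part (2) following from the observation that Puiseux monoids are reduced.

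For part (1), the governing fact is that any two positive rationals are commensurable. I would first dispose of the trivial case $M = \{0\}$ (take $q = 0$), and otherwise fix some $x_0 \in M^\bullet$ and set $q := \phi(x_0)/x_0 \in \qq_{\ge 0}$. Given an arbitrary $y \in M^\bullet$, the ratio $x_0/y$ lies in $\qq_{> 0}$, so writing $x_0/y = n/m$ with $m, n \in \nn$ yields the relation $m x_0 = n y$ inside $M$. Since $\phi$ is an additive homomorphism, $\phi(kz) = k\phi(z)$ for every $z \in M$ and $k \in \nn$, and therefore $m\phi(x_0) = \phi(m x_0) = \phi(n y) = n\phi(y)$. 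Solving for $\phi(y)$ gives $\phi(y) = (m/n)\phi(x_0) = q\,(m/n)x_0 = q y$, the last equality using $y = (m/n)x_0$. As $\phi(0) = 0 = q\cdot 0$, we conclude $\phi(x) = qx$ for all $x \in M$; note this single argument handles the subcase $q = 0$ uniformly, without splitting on whether $\phi$ vanishes.

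For part (2), I would first record the key structural remark that every Puiseux monoid is reduced: if $x \in M$ admits an additive inverse in $M$, then $-x \in \qq_{\ge 0}$ forces $x = 0$, so $M^\times = N^\times = \{0\}$, which is the identity of the multiplicative notation used in Definition~\ref{def:transfer homomorphism}. Consequently the first clause of (T1), namely $N = \phi(M)N^\times$, collapses to $N = \phi(M)$, i.e.\ to surjectivity. This immediately yields the forward implication: if $\phi$ is a transfer homomorphism then (T1) forces $\phi$ to be onto.

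For the converse, suppose $\phi$ is surjective and write $\phi(x) = qx$ as in part (1). We may assume $N \neq \{0\}$, the trivial codomain being degenerate; then $q \neq 0$, for otherwise $\phi(M) = \{0\} \neq N$, so in fact $q > 0$. Hence $\phi(x) = 0 \iff x = 0$, and the second clause of (T1), $\phi^{-1}(N^\times) = M^\times$, reads $\phi^{-1}(\{0\}) = \{0\}$ and holds. It remains to check (T2): suppose $\phi(a) = b_1 + b_2$ with $a \in M$ and $b_1, b_2 \in N$. By surjectivity each $b_i$ equals $q m_i$ for some $m_i \in M$, so setting $a_i := m_i$ gives $\phi(a_i) = q m_i = b_i$, while $q(m_1 + m_2) = b_1 + b_2 = qa$ together with $q > 0$ forces $a_1 + a_2 = m_1 + m_2 = a$. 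Thus the factorization of $\phi(a)$ lifts, (T2) holds, and $\phi$ is a transfer homomorphism.

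The main obstacle is concentrated in part (1): the passage from a bare additive homomorphism to genuine scalar multiplication rests entirely on commensurability, and the delicate point is to verify that the constant $q = \phi(x_0)/x_0$ is independent of the auxiliary element $y$. Once part (1) is established, part (2) is essentially mechanical, the only substantive observation being that surjectivity is exactly what furnishes the preimages $m_i$ required to split a factorization in (T2), and that reducedness of Puiseux monoids is what reduces (T1) to the single condition of surjectivity.
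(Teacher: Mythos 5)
Your proposal is correct. Both your part (1) and the paper's rest on the same underlying commensurability trick, but the scaffolding differs: the paper fixes a minimal generating set $\{n_1,\dots,n_k\}$ of the integer submonoid $M \cap \nn_0$, sets $q = \phi(n_j)/n_j$ for a generator with nonzero image, writes the numerator $\mathsf{n}(r) = \mathsf{d}(r)\, r$ as an $\nn_0$-combination of the $n_i$, and exploits $n_i\phi(n_j) = \phi(n_i n_j) = n_j\phi(n_i)$, whereas you fix a single arbitrary $x_0 \in M^\bullet$ and, for any $y \in M^\bullet$, use $m x_0 = n y$ to get $m\phi(x_0) = n\phi(y)$ outright. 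Your version buys a shorter argument that bypasses $M \cap \nn_0$ and its generating set entirely, and it absorbs the trivial-homomorphism case ($q = 0$ with $M \neq \{0\}$) into the general computation, which the paper has to set aside separately. Conversely, for part (2) the paper is content to say the claim ``is a direct consequence of condition (1),'' while you supply the actual verification: reducedness of Puiseux monoids collapses (T1) to surjectivity together with $\phi^{-1}(0) = \{0\}$, and surjectivity plus $q > 0$ produces the preimages needed for the lifting in (T2); writing this out is a genuine gain in completeness. One caveat both you and the paper gloss over: if $N = \{0\}$ but $M \neq \{0\}$, the (surjective) zero map fails $\phi^{-1}(N^\times) = M^\times$, so the ``if and only if'' in (2) silently presupposes that this degenerate case is excluded --- your phrase ``the trivial codomain being degenerate'' is doing real work there.
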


\begin{proof}
	Let us argue first that $\phi$ is given by rational multiplication. It is clear that if a map $P \to P'$ between two Puiseux monoids is multiplication by a rational number, then it is a monoid homomorphism. Thus, it suffices to verify that the only homomorphisms of Puiseux monoids are those given by rational multiplication. To do this, consider the Puiseux monoid homomorphism $\varphi \colon P \to P'$. Because the trivial homomorphism is multiplication by $0$, there is no loss in assuming that $P \neq \{0\}$. Let $\{n_1, \dots, n_k\}$ be a minimal set of generators for the additive monoid $N = P \cap \nn_0$. Notice that $N \neq \{0\}$ and, therefore, $k \ge 1$. The fact that $\varphi$ is nontrivial implies that $\varphi(n_j) \neq 0$ for some $j \in \{1,\dots,k\}$. Set $q = \varphi(n_j)/n_j$, and take $r \in P^\bullet$ and $c_1, \dots, c_k \in \nn_0$ satisfying that $\mathsf{n}(r) = c_1 n_1 + \dots + c_k n_k$. Since $n_i \varphi(n_j) = \varphi(n_i n_j) = n_j \varphi(n_i)$ for each $i \in \{1,\dots,k\}$, one obtains
	%\vspace{-4pt}
	\[
		\varphi(r) = \frac 1{\mathsf{d}(r)} \varphi(\mathsf{n}(r)) = \frac 1{\mathsf{d}(r)} \sum_{i=1}^k c_i \varphi(n_i) = \frac 1{\mathsf{d}(r)} \sum_{i=1}^k c_i n_i \frac{\varphi (n_j)}{n_j} = rq.
	\]
	As a result, the homomorphism $\varphi$ is just multiplication by $q \in \mathbb{Q}_{>0}$.
	
	It is easy to see that condition~(2) is a direct consequence of condition~(1), which completes the proof.
\end{proof}

\begin{remark}
	A Puiseux monoid $M$ is said to be \emph{increasing} (resp., \emph{decreasing}) if $M$ can be generated by an increasing (resp., decreasing) sequence of rational numbers. Also, we say that $M$ is \emph{bounded} if $M$ can be generated by a bounded sequence of rational numbers, and it is said to be \emph{strongly bounded} if it can be generated by a sequence of rational numbers whose numerator set is bounded. Finally, $M$ is called \emph{dense} if it contains $0$ as a limit point. Although the definitions just given are not algebraic in nature, we should notice that they are all preserved by Puiseux monoid isomorphisms. This explains why all of them have been useful in the study of the atomic structure of Puiseux monoids (see \cite{fG17} and \cite{GG17}).
\end{remark}

With notation as in Definition~\ref{def:transfer homomorphism}, when $M$ and $N$ are reduced, we can restate the first condition above as \vspace{4pt} \\
\indent (T1') $\theta$ is surjective and $\theta^{-1}(1) = 1$. \vspace{4pt}

We have already mentioned that a transfer homomorphism allows us to shift the atomic structure and the arithmetic of length of factorizations from its codomain to its domain. This property is formally described in the following proposition.

\begin{prop} \cite[Proposition~1.3.2]{GH06a} \label{prop:transfer properties}
	If $\theta \colon M \to N$ is a transfer homomorphism of atomic monoids, then the following conditions hold:
	\begin{enumerate}
		\item $a \in \mathcal{A}(M)$ if and only if $\theta(a) \in \mathcal{A}(N)$; \vspace{3pt}
		\item $M$ is atomic if and only if $N$ is atomic; \vspace{3pt}
		\item $\mathsf{L}_M(x) = \mathsf{L}_N(\theta(x))$ for all $x \in M$; \vspace{3pt}
		\item $\mathcal{L}(M) = \mathcal{L}(N)$, and so $M$ is a BF-monoid if and only if $N$ is a BF-monoid.
	\end{enumerate}
\end{prop}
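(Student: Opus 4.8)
The plan is to derive all four statements from a single factorization-lifting property obtained by iterating (T2), while keeping careful track of units through condition (T1). I would first record the elementary consequence of (T1) that $\theta(a) \in N^\times$ if and only if $a \in M^\times$; in particular, $\theta$ sends non-units to non-units. Next I would prove the lifting lemma: \emph{if $\theta(a) = c_1 \cdots c_n$ in $N$ with $n \ge 1$, then there exist $a_1, \dots, a_n \in M$ with $a = a_1 \cdots a_n$ and $\theta(a_i) = c_i$ for each $i$.} This goes by induction on $n$, the case $n = 1$ being trivial and the inductive step splitting off $c_1$ from $c_2 \cdots c_n$ by a single application of (T2). Since every notion appearing in the statement is invariant under passing to the associated reduced monoids, and $\theta$ descends to a transfer homomorphism $M_{\text{red}} \to N_{\text{red}}$ (a routine check), I may assume that $M$ and $N$ are reduced, so that (T1) reads as (T1'); this removes any ambiguity in factorization lengths coming from unit factors.

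For part (1), suppose first that $a \in \mathcal{A}(M)$. Then $a \notin M^\times$ forces $\theta(a) \notin N^\times$, and any factorization $\theta(a) = b_1 b_2$ lifts through (T2) to $a = a_1 a_2$ with $\theta(a_i) = b_i$; since $a$ is an atom, some $a_i$ is a unit, whence the corresponding $b_i = \theta(a_i)$ is a unit, and $\theta(a) \in \mathcal{A}(N)$. Conversely, if $\theta(a) \in \mathcal{A}(N)$, then $a \notin M^\times$, and from any factorization $a = a_1 a_2$ we obtain $\theta(a) = \theta(a_1)\theta(a_2)$; atomicity of $\theta(a)$ makes some $\theta(a_i)$ a unit, hence $a_i \in M^\times$ by (T1), so $a \in \mathcal{A}(M)$.

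Parts (2) and (3) are then immediate combinations of the lifting lemma with part (1). For (2), if $N$ is atomic and $x \in M$ is a non-unit, then $\theta(x)$ is a non-unit, hence a product $c_1 \cdots c_n$ of atoms of $N$; lifting this product and invoking (1) exhibits $x$ as a product of atoms of $M$, so $M$ is atomic, and the reverse implication follows symmetrically after writing an arbitrary element of $N$ as $\theta(x)$ using surjectivity. For (3), the inclusion $\mathsf{L}_M(x) \subseteq \mathsf{L}_N(\theta(x))$ comes from pushing a length-$n$ factorization of $x$ forward (its image is a product of $n$ atoms by (1)), and the reverse inclusion comes from lifting a length-$n$ factorization of $\theta(x)$ back and again applying (1); because non-units map to non-units, no atom is lost or created and the length $n$ is preserved exactly in both directions.

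Finally, part (4) follows from (3): the inclusion $\mathcal{L}(M) \subseteq \mathcal{L}(N)$ is automatic since $\mathsf{L}_M(x) = \mathsf{L}_N(\theta(x))$ for every $x \in M$, and for the reverse inclusion I would use surjectivity to write an arbitrary $y \in N$ as $y = \theta(x)$, giving $\mathsf{L}_N(y) = \mathsf{L}_M(x) \in \mathcal{L}(M)$; the BF-monoid equivalence is then read off from $\mathcal{L}(M) = \mathcal{L}(N)$, since $M$ is a BF-monoid precisely when every member of $\mathcal{L}(M)$ is finite. The one real subtlety—and the step I would be most careful about—is the bookkeeping of units: each direction relies on \emph{both} halves of (T1) to guarantee that units lift to units and non-units to non-units, so that factorizations are neither lengthened nor shortened under $\theta$. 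Reducing to $M_{\text{red}}$ and $N_{\text{red}}$ at the outset is exactly what makes this bookkeeping transparent, since there every length coincides with a well-defined length in the reduced monoid.
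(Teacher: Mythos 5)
The paper gives no proof of this proposition: it is imported verbatim as a citation to \cite[Proposition~1.3.2]{GH06a}, so there is no internal argument to compare yours against. Your proof is correct and is the standard one (and essentially the argument in the cited reference): iterate (T2) into an $n$-fold lifting lemma, use (T1) to make units correspond exactly to units, deduce that atoms correspond to atoms, and then push factorizations forward and pull them back to get equality of length sets. The preliminary reduction to reduced monoids is also legitimate; the fact that $\theta$ descends to a transfer homomorphism $\theta_{\text{red}} \colon M_{\text{red}} \to N_{\text{red}}$ is exactly the routine check the paper itself invokes in the proof of Theorem~\ref{thm:morphisms from PM satisfying only (T1)}, and in the setting of this paper (Puiseux monoids and reduced codomains) that step is vacuous anyway.
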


%The next corollary immediately follows from Proposition~\ref{prop:homomorphisms between PM}.
%
%\begin{cor}
%	A homomorphism between Puiseux monoids is a transfer homomorphism if and only if it is surjective.
%\end{cor}

For a Puiseux monoid $M$, let $\text{Aut}(M)$ denote the group of automorphisms of $M$. As we have seen in Proposition~\ref{prop:homomorphisms between PM}, the set of homomorphisms between Puiseux monoids is very exclusive. In particular, we might wonder whether $\text{Aut}(M)$ is always trivial. However, it is not hard to verify, for instance, that when $M_1 = \langle 1/2^n \mid n \in \nn \rangle$, multiplication by $1/2$ is in $\text{Aut}(M_1)$. This example might not be the most desirable because $M_1$ fails to be atomic; in fact, $M_1$ does not contain any atoms. The next proposition exhibits a family of atomic monoids whose groups of automorphisms are nontrivial. First, let us introduce a family of atomic Puiseux monoids whose atomicity is used in the proof.

For $r \in \qq_{>0}$, the monoid $M_r = \langle r^n \mid n \in \nn \rangle$ is the \emph{multiplicatively $r$-cyclic} Puiseux monoid. If $\mathsf{n}(r), \mathsf{d}(r) > 1$, then $M_r$ is atomic with $\mathcal{A}(M_r) = \{r^n \mid n \in \nn \}$ (see \cite[Theorem~6.2]{GG17}).

\begin{prop}
	Let $r \in \qq_{>0}$ such that $\mathsf{n}(r), \mathsf{d}(r) > 1$. If $M = \langle r^n \mid n \in \zz \rangle$, then $\emph{Aut}(M) \cong \zz$.
\end{prop}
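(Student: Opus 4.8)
The plan is to reduce the computation of $\mathrm{Aut}(M)$ to identifying a subgroup of $(\qq_{>0},\cdot)$. By Proposition~\ref{prop:homomorphisms between PM}(1), every endomorphism of $M$ is multiplication by some $q\in\qq_{\ge0}$, and such a map is an automorphism precisely when $q\neq0$ and $qM=M$. Since the composite of multiplication by $q$ and multiplication by $q'$ is multiplication by $qq'$, the assignment $q\mapsto(x\mapsto qx)$ identifies $\mathrm{Aut}(M)$ with the group $G:=\{q\in\qq_{>0}\mid qM=M\}\le(\qq_{>0},\cdot)$. First I would check that $r\in G$: since $r\cdot r^n=r^{n+1}$ and $r^{-1}\cdot r^n=r^{n-1}$, both $rM\subseteq M$ and $r^{-1}M\subseteq M$, whence $rM=M$. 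As $r\neq1$ (its numerator and denominator are coprime and each exceed $1$), the map $n\mapsto r^n$ embeds $\zz$ into $G$, so $\langle r\rangle\cong\zz$ and $\langle r\rangle\subseteq G$. It then remains to prove the reverse inclusion $G\subseteq\langle r\rangle$.

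For the reverse inclusion, fix $q\in G$. Then $q=q\cdot1\in qM=M$ and, as $G$ is a group, $q^{-1}\in q^{-1}M=M$ as well. Writing each of $q$ and $q^{-1}$ as a sum of generators, say $q=\sum_{i=1}^{s}r^{n_i}$ and $q^{-1}=\sum_{j=1}^{t}r^{m_j}$ with $s,t\ge1$, multiplying out yields $1=\sum_{i,j}r^{n_i+m_j}$, a representation of $1$ as a sum of $st$ integer powers of $r$. Thus the entire inclusion hinges on the following lemma, which I expect to be the main obstacle: \emph{the only way to write $1$ as a sum of integer powers of $r$ is the trivial one $1=r^0$}. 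Granting it, we obtain $st=1$, so $s=t=1$ and $q=r^{n_1}\in\langle r\rangle$, completing the proof that $G=\langle r\rangle\cong\zz$ and hence $\mathrm{Aut}(M)\cong\zz$. (This step may alternatively be phrased through Proposition~\ref{prop:transfer properties}: an automorphism is surjective, hence a transfer homomorphism, hence carries the atom $1$ to an atom; the lemma is exactly the statement that $1$ is an atom of $M$.)

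To prove the lemma I would first reduce to the case $r>1$: replacing $r$ by $r^{-1}$ leaves both the generating set $\{r^n\mid n\in\zz\}$ and the subgroup $\langle r\rangle$ unchanged and preserves the hypothesis $\mathsf{n}(r),\mathsf{d}(r)>1$, so there is no loss. Assuming $r>1$, given a representation $1=\sum_n c_n r^n$ I would split it as $A+B$, where $A=\sum_{n\ge0}c_nr^n\in\langle r^n\mid n\ge0\rangle$ and $B=\sum_{n<0}c_nr^n\in\langle r^{-k}\mid k\ge1\rangle$. Every nonzero element of $\langle r^n\mid n\ge0\rangle$ is at least $1$ (each such generator is $\ge1$), so $A\in\{0\}\cup[1,\infty)$; since $A+B=1$ with $B\ge0$, either $A\ge1$, forcing $B=0$ and $A=1=r^0$ (a single term, as all present terms are $\ge1$), i.e.\ the trivial representation, or else $A=0$, which would force $1=B\in\langle r^{-k}\mid k\ge1\rangle$. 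The heart of the matter is to rule out this last possibility, and here a congruence argument does the job: writing $a=\mathsf{n}(r)$ and $b=\mathsf{d}(r)$ (so $r^{-k}=b^k/a^k$) and clearing denominators in $1=\sum_{k\ge1}c_{-k}b^k/a^k$ by multiplying through by $a^{K}$, where $K$ is the largest index present, gives the integer identity $a^{K}=\sum_{k}c_{-k}b^{k}a^{K-k}$; reducing modulo $b$, every term on the right is divisible by $b$ while $a^{K}$ is a unit modulo $b$ (since $\gcd(a,b)=1$ and $b>1$), a contradiction. Hence the case $A=0$ cannot occur, every representation of $1$ is trivial, and the lemma follows.
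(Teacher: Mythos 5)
Your proposal is correct, but it takes a genuinely different route from the paper's own proof. Both arguments start from Proposition~\ref{prop:homomorphisms between PM}(1), reducing the problem to deciding which rationals $q>0$ satisfy $qM=M$. From there the paper determines the \emph{full} set of atoms: it shows $\mathcal{A}(M)=\{r^n\mid n\in\zz\}$, splitting into the cases $r<1$ and $r>1$ and leaning on the external result \cite[Theorem~6.2]{GG17} that the multiplicatively $r$-cyclic monoid $M_r$ is atomic with atoms $\{r^n\mid n\in\nn\}$; an automorphism must then map the atom $r$ back into this set, forcing it to be multiplication by an integer power of $r$. You sidestep the atom computation entirely: from $qM=M$ you extract that both $q$ and $q^{-1}$ lie in $M$, multiply their representations as sums of generators, and reduce everything to the single claim that $1$ admits only the trivial representation as a sum of integer powers of $r$ (equivalently, that $1$ is an atom of $M$), which you then prove from scratch by splitting off the nonnegative powers and killing the purely-negative-power case with a divisibility argument modulo $\mathsf{d}(r)$ --- exactly the point where the hypotheses $\mathsf{n}(r),\mathsf{d}(r)>1$ enter, via your reduction $r\mapsto r^{-1}$. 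Both your lemma and its proof are sound, and the trick of pairing $q$ with $q^{-1}$ is a genuine shortcut. The trade-off: the paper's route depends on a citation but yields strictly more structural information (atomicity of $M$ and its complete atom set), whereas your argument is self-contained and elementary, at the price of learning nothing about $\mathcal{A}(M)$ beyond the element $1$.
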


\begin{proof}
	Set $A = \{r^n \mid n \in \zz\}$. For $n \in \zz$, the fact that $r^n A = A$ implies that multiplication by $r^n$ is an endomorphism of $M$ whose inverse is given by multiplication by $r^{-n}$. Thus, multiplication by any integer power of $r$ is an automorphism of $M$. To prove that these are the only elements of $\text{Aut}(M)$, let us first argue that $M$ is atomic with $\mathcal{A}(M) = A$.
	
	Assume first that $r < 1$. Fix $k \in \zz$, and let us check that $r^k \in \mathcal{A}(M)$. To do this notice that the monoid $\langle r^n \mid n \ge k \rangle$ is the isomorphic image (under multiplication by $r^{k-1}$) of the multiplicatively $r$-cyclic Puiseux monoid $M_r$, which is atomic with set of atoms $A = \{r^n \mid n \in \nn\}$. Since $r \in \mathcal{A}(M_r)$, it follows that $r \notin \langle r^n \mid n > 1 \rangle$. Then $r^k \notin \langle r^n \mid n > k \rangle$. As $r < 1$, no atom in $\{r^n \mid n < k\}$ divides $r^k$. Hence $r^k \notin \langle A \setminus \{r^k\} \rangle$ and, therefore, $r^k \in \mathcal{A}(M)$. As a result, $\mathcal{A}(M) = A$.
	
	Now suppose that $r > 1$. As before, fix $k \in \zz$. Because $r > 1$, proving that $r^k \in \mathcal{A}(M)$ amounts to showing that $r^k \notin \langle r^n \mid n < k \rangle$. Let us assume, by way of contradiction, that this is not the case. Then $r^k = a_1 r^{n_1} + \dots + a_t r^{n_t}$ for some $a_1, \dots, a_t \in \nn$ and $n_1, \dots, n_t \in \nn$ with $k > n_1 > \dots > n_t$. As a consequence, $r^{k - n_t + 1} \in \langle r^{n_1 - n_t + 1}, r^{n_2 - n_t + 1}, \dots, r \rangle$, which contradicts the fact that $r^{k - n_t + 1} \in \mathcal{A}(M_r)$. As in the previous case, we conclude that $\mathcal{A}(M) = A$.
	
	By Proposition~\ref{prop:homomorphisms between PM}, any automorphism of $M$ is given by rational multiplication. Take $s \in \qq_{>0}$ such that $\phi_s \in \text{Aut}(M)$, where $\phi_s$ consists in left multiplication by $s$. Because $\phi_s$ must send atoms to atoms, it follows that $sr = \phi_s(r) \in A$. Therefore $s$ must be an integer power of $r$. Hence $\text{Aut}(M)$ is precisely $A$ when seen as a multiplicative subgroup of $\qq$. As $A$ is the infinite cyclic group, the proof follows.
\end{proof}
\medskip

\section{Finite Transfer Puiseux Monoids} \label{sec:finite transfer PMs}

Now we turn to characterize the transfer homomorphisms from Puiseux monoids to finitely generated monoids.

\begin{definition}
	We say that a Puiseux monoid $M$ is \emph{transfer finite} if there exists a transfer homomorphism from $M$ to a finitely generated monoid.
\end{definition}

By the fundamental structure theorem of finitely generated abelian groups, it immediately follows that every finitely generated monoid $F$ is a submonoid of a group $T \times \zz^\beta$ for some finite abelian group $T$ and $\beta \in \nn_0$. In case of $F$ being reduced, it can be thought of as a submonoid of $T \times \nn_0^\beta$.

Condition (T2) in the definition of a transfer homomorphism $\theta \colon M \to F$ is crucial to transfer the factorization behavior of $F$ to $M$. However, the reader might wonder how much the set $\text{Hom}(M,F)$ will increase if we drop condition (T2). Surprisingly, the set of homomorphisms will remain the same as long as we impose $F$ to be reduced. This fact facilitates to classify the Puiseux monoids that happen to be transfer finite, as we will prove in the next theorem. First, notice that if $\phi \colon M \to N$ is a monoid homomorphism, then the map $\phi_{\text{red}} \colon M_{\text{red}} \to N_{\text{red}}$ defined by $\phi_{\text{red}}(aM^\times) = \phi(a)N^\times$ is also a monoid homomorphism.

%Theorem~\ref{thm:morphisms from PM satisfying only (T1)} also allows us to classify the Puiseux moniods that happen to be transfer finite.

\begin{theorem} \label{thm:morphisms from PM satisfying only (T1)}
	Let $M$ be a nontrivial Puiseux monoid, and let $F$ be a finitely generated (additive) monoid.
	
	\begin{enumerate}
		\item If $\theta \colon M \to F$ is a homomorphism satisfying $\theta^{-1}(0) = \{0\}$, then $M$ is isomorphic to a numerical monoid.
		\vspace{3pt}
		\item The Puiseux monoid $M$ is transfer finite if and only if it is isomorphic to a numerical monoid.
		\end{enumerate}
\end{theorem}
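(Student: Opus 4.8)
The plan is to reduce everything to part~(1). The backward implication of part~(2) is immediate: if $M$ is isomorphic to a numerical monoid $N$, that isomorphism is a transfer homomorphism and $N$ is finitely generated, so $M$ is transfer finite. For the forward implication, a transfer homomorphism $\theta\colon M\to F$ satisfies (T1), whence $\theta^{-1}(0)\subseteq\theta^{-1}(F^\times)=M^\times=\{0\}$ (Puiseux monoids are reduced, their nonzero elements being positive), so $\theta$ meets the hypothesis of part~(1). It is also worth recording the target in a usable form: a nontrivial Puiseux monoid is isomorphic to a numerical monoid precisely when it is isomorphic to a nontrivial submonoid of $\nn_0$, because dividing such a submonoid $S$ by $d=\gcd(S^\bullet)$ yields a cofinite submonoid of $\nn_0$, i.e.\ a numerical monoid, isomorphic to $S$.

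For part~(1), I would first pass to the ambient group. Using the structural fact quoted before the theorem, embed $F$ into $T\times\zz^\beta$ with $T$ finite and $\beta\in\nn_0$, and compose to obtain a monoid homomorphism $\psi\colon M\to T\times\zz^\beta$. Since the embedding is injective, $\psi^{-1}(0)=\theta^{-1}(0)=\{0\}$. Write $\psi=(t,v)$ with $t\colon M\to T$ and $v\colon M\to\zz^\beta$. Because $M$ is cancellative, $v$ extends uniquely to a group homomorphism $\bar v\colon H\to\zz^\beta$, where $H:=M-M$ is the subgroup of $\qq$ generated by $M$; as $M$ is nontrivial, $H$ is a nonzero, hence torsion-free rank-one, subgroup of $\qq$.

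The decisive step is to show $\bar v\neq 0$ and then to exploit the rank-one structure of $H$. If $\bar v$ were zero, then $\psi(M)$ would lie in the finite group $T$; writing $m=|T|$, for every $x\in M$ we would get $\psi(mx)=m\,\psi(x)=0$, so $mx\in\psi^{-1}(0)=\{0\}$ and hence $x=0$, contradicting that $M$ is nontrivial (any nonzero Puiseux monoid is infinite). Thus $\bar v\neq 0$. Now $\bar v(H)$ is a nonzero subgroup of the free abelian group $\zz^\beta$, hence free, and simultaneously a quotient of the rank-one group $H$, so it has rank one, i.e.\ $\bar v(H)\cong\zz$. Consequently $\ker\bar v$ has rank zero inside the torsion-free group $H$, forcing $\ker\bar v=\{0\}$; therefore $\bar v$ is injective and $H\cong\zz$. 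A rank-one subgroup of $\qq$ that is infinite cyclic has the form $c\zz$ for some $c\in\qq_{>0}$, so $M\subseteq H\cap\qq_{\ge 0}=c\nn_0$. Multiplication by $c^{-1}$ then identifies $M$ with a nontrivial submonoid of $\nn_0$, which by the reduction above is isomorphic to a numerical monoid.

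I expect the main obstacle to be cleanly disposing of the torsion component $T$: the hypothesis $\theta^{-1}(0)=\{0\}$ controls only the full map $\psi$, not the projection $v$, so one cannot simply discard $T$. The $mx=0$ argument is exactly what upgrades the seemingly weak condition $\psi^{-1}(0)=\{0\}$ into nontriviality of the free part $v$, after which the classification of rank-one torsion-free subgroups of $\qq$ finishes the job. A secondary point requiring care is the passage from ``$H\cong\zz$'' to ``$M$ embeds in $\nn_0$'': one must use that the isomorphism $H\cong\zz$ is realized \emph{inside} $\qq$ as $H=c\zz$, so that the nonnegative part of $M$ is genuinely a scaled copy of a submonoid of $\nn_0$.
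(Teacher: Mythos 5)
Your proof is correct, but it takes a genuinely different route from the paper's. The shared skeleton is the splitting of $F$ into a torsion part $T$ and a free part, together with the trick of killing torsion by multiplying by $|T|$ and invoking $\theta^{-1}(0)=\{0\}$; the paper does exactly this in its ``$\beta=0$'' case. From there the arguments diverge. The paper stays at the monoid level: it reduces to the case that $F$ is reduced, embeds $F$ in $T\times\nn_0^{\beta}$, and then, by explicit coprime-multiplier arithmetic (from $r,s\in M^{\bullet}$ one gets $mr=ns$ with $\gcd(m,n)=1$, hence coordinatewise divisibility), shows that the projection of $\theta(M)$ to $\nn_0^{\beta}$ lies in a cyclic monoid and that $\pi\circ\theta$ is injective, so $M$ is finitely generated. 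You instead pass to quotient groups: you embed $F$ in the group $T\times\zz^{\beta}$, extend the free coordinate to a group homomorphism $\bar v\colon H\to\zz^{\beta}$ on the quotient group $H\le\qq$ of $M$, and use structural facts about abelian groups (subgroups of free abelian groups are free, rank additivity, torsion-free rank-zero groups are trivial) to conclude $H\cong\zz$, hence $H=c\zz$ and $M\subseteq c\nn_0$. Your version buys two things. First, it needs no reduction to $F_{\text{red}}$ --- a step that is actually delicate in the paper's part~(1), since the hypothesis $\theta^{-1}(0)=\{0\}$ does not by itself give $\theta_{\text{red}}^{-1}(0)=\{0\}$ when $F$ has nontrivial units (consider the inclusion $\nn_0\hookrightarrow\zz$), whereas embedding into the ambient group makes units a non-issue, so your argument covers non-reduced $F$ directly. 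Second, in part~(2) your observation that $0\in F^{\times}$, so that (T1) alone yields $\theta^{-1}(0)\subseteq\theta^{-1}(F^{\times})=M^{\times}=\{0\}$, replaces the paper's passage to the reduced setting and condition (T1'). What the paper's argument buys in exchange is elementarity: it never forms a quotient group and uses nothing about torsion-free abelian groups beyond what is visible in coordinates.
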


\begin{proof}
	We argue first part~(1). It is easy to see that $\theta_{\text{red}} \colon M_{\text{red}} = M \to F_{\text{red}}$ is also a transfer homomorphism. So we can assume, without loss of generality, that $F$ is reduced. Suppose that $F$ is a submonoid of $T \times \nn_0^\beta$, where $T$ is a finite abelian group and $\beta \in \nn_0$. First, assume, by way of contradiction, that $\beta = 0$. In this case, 
	it is not hard to verify that $\theta(M)$ must be a subgroup of $T$. If $\alpha = |\theta(M)|$ and $r \in M^\bullet$, then $\theta(\alpha r) =  \alpha \theta(r) = 0$. This contradicts that $\theta^{-1}(0) = \{0\}$. Thus, $\beta \ge 1$.
	
	Define $\pi \colon T \times \nn_0^\beta \to \nn_0^\beta$ by $\pi(t,\mathbf{v}) = \mathbf{v}$ for all $t \in T$ and $\mathbf{v} \in \nn^\beta$. Let us verify that $\pi(\theta(M))$ is finitely generated. Take $\textbf{x} = (x_1, \dots, x_\beta) \in \pi(\theta(M))^\bullet$, and let $d = \gcd(x_1, \dots, x_\beta)$. We shall verify that $\pi(\theta(M)) \subseteq \langle \textbf{x}/d \rangle$. To do so, consider $\textbf{y} = (y_1, \dots, y_\beta) \in \pi(\theta(M))^\bullet$. Now take $r,s \in M^\bullet$ such that $\pi(\theta(r)) = \textbf{x}$ and $\pi(\theta(s)) = \textbf{y}$, and take $m,n \in \nn$ satisfying that $\gcd(m,n) = 1$ and $mr = ns$. Because
	\[
		m \textbf{x} = \pi(\theta(mr)) = \pi(\theta(ns)) = n \textbf{y},
	\]
	one finds that $mx_i = ny_i$ for $i = 1, \dots, \beta$. As $\gcd(m,n) = 1$, it follows that $n$ divides each $x_i$, i.e., $d/n \in \nn$. As a result,
	\[
		\mathbf{y} = \frac mn \textbf{x} = \bigg(\frac{md}n \bigg) \frac{\mathbf{x}}{d} \in \bigg\langle \frac{\mathbf{x}}{d} \bigg\rangle.
	\]
	Hence $\pi(\theta(M)) \subseteq \langle \textbf{x}/d \rangle$. Because $\langle \mathbf{x}/d \rangle$ is isomorphic to $\nn_0$, it follows that $\pi(\theta(M))$ is finitely generated. 
	
	We show now that $M$ is also finitely generated, which amounts to proving that $\pi \circ \theta \colon M \to \nn_0^\beta$ is injective. First, let us verify that $\pi$ is injective when restricted to $\theta(M)$. As $\theta(M)$ is a submonoid of the reduced monoid $F$, it is also reduced. Suppose that $(t_1, \mathbf{v}), (t_2, \mathbf{v}) \in \theta(M)$, and let us check that $t_1 = t_2$. If $\mathbf{v} = \mathbf{0}$, then $t_1 = t_2 = 0$ because $\theta(M)^\times$ is trivial. Otherwise, there exist $r,s \in M^\bullet$ such that $\theta(r) = (t_1, \mathbf{v})$ and $\theta(s) = (t_2, \mathbf{v})$. Take $m,n \in \nn$ such that $mr = ns$. Since
	\[
		m (t_1, \mathbf{v}) = m \theta(r) = n \theta(s) = n (t_2, \mathbf{v})
	\]
	and $\mathbf{v} \neq \mathbf{0}$, one finds that $m = n$ and, therefore, $r = s$. This, in turn, implies that $t_1 = t_2$. Hence the restriction of $\pi$ to $\theta(M)$ is injective.
	
	To conclude the proof of part~(1), we show that $\theta$ is also injective. Let $r,s \in M$ such that $\theta(r) = \theta(s) \neq 0$. Taking $m,n \in \nn$ satisfying $mr = ns$, we have
	\[
		m \theta(r) = \theta(mr) = \theta(ns) = n \theta(s).
	\]
	Since $\theta(M)$ is reduced, the element $\theta(r)$ must be torsion-free in $T \times \nn_0^\beta$. Thus, $m = n$, which implies that $r = s$. As $\theta^{-1}(0) = \{0\}$, it follows that $|\theta^{-1}(a)| = 1$ for all $a \in \theta(M)$. Therefore $\theta$ is injective, leading us to the injectivity of $\pi \circ \theta$. Now that fact that $\pi(\theta(M))$ is finitely generated implies that $M$ is also finitely generated. Hence $M$ must be isomorphic to a numerical monoid.
	
	Finally, let us argue part (2) of the theorem. For the direct implication, assume that the homomorphism $\theta \colon M \to F$ is a transfer homomorphism. As we did in the proof of part~(1), we can assume that $F$ is a reduced. As both $M$ and $F$ are reduced, condition (T1') yields $\theta^{-1}(0) = \{0\}$. Now it follows by part~(1) that the Puiseux monoid $M$ is isomorphic to a numerical monoid. For the reverse implication, just take $\theta$ to be the identity map.
\end{proof}

Imposing the homomorphism $\theta \colon M \to F$ in Theorem~\ref{thm:morphisms from PM satisfying only (T1)} to satisfy $\theta^{-1}(0) = \{0\}$ is not superfluous even if $M$ is atomic. Then next example sheds some light upon this observation.

\begin{example}
	Let $p_1, p_2, \dots, $ be an enumeration of the odd prime numbers, and let $M = \langle 1/p_n \mid n \in \nn \rangle$. It is not hard to verify that $\mathcal{A}(M) = \{1/p_n \mid n \in \nn\}$. This implies that $M$ is atomic. Now define $\theta \colon M \to \zz_2$ by setting $\theta(0) = 0$, $\theta(r) = 0$ if $\mathsf{n}(r)$ is even, and $\theta(r) = 1$ if $\mathsf{n}(r)$ is odd. It follows immediately that $\theta$ is a surjective monoid homomorphism. However, $M$ is not isomorphic to any numerical monoid because it contains infinitely many atoms.
\end{example}
\medskip

\section{Strongly Primary Puiseux Monoids} \label{sec:finitary PM}

In this section we investigate which Puiseux monoids are strongly primary. In the case of Puiseux monoids, being strongly primary is equivalent to being finitary. In general, finitary monoids provide a common algebraic framework to study not only the arithmetic of strongly primary monoids but also that one of $v$-noetherian $G$-monoids (see \cite[Section~2.7]{GH06b}). The structure of strongly primary monoids was first studied by Satyanarayana in~\cite{mS72} and has received substantial attention in the literature since then (see \cite{GHL07} and references therein). In particular, the class of strongly primary monoids yields multiplicative models for a large class of one-dimensional local domains (see~\cite[Proposition~2.10.7]{GH06b}).

All monoids mentioned in this section are assumed to be reduced. 

\begin{definition}
	Let $M$ be a monoid.
	\begin{enumerate}
		\item A submonoid $S$ of $M$ is called \emph{divisor-closed} provided that for all $x \in M$ and $s \in S$ the fact that $x \mid_M s$ implies that $x \in S$.
		\item The monoid $M$ is called \emph{primary} if it is nontrivial and its only divisor-closed submonoids are $\{0\}$ and $M$.
		\item The monoid $M$ is called \emph{finitary} if $M$ is a BF-monoid and there exist $n \in \nn$ and a finite subset $S \subseteq M^\bullet$ such that $n M^\bullet  \subseteq S + M$.
		\item The monoid $M$ is called \emph{strongly primary} provided that $M$ is both primary and finitary.
	\end{enumerate}
\end{definition}

Let $M$ be a Puiseux monoid, and let $M'$ be a nontrivial proper submonoid of $M$. Notice that for all $x \in M \setminus M'$ and $y \in M'$ satisfying that $x \mid_M y$, the fact that $x \mid_M \mathsf{n}(x) \mathsf{d}(y) y \in M'$ immediately implies that $M'$ is not a divisor-closed submonoid of $M$. Thus, $M$ is primary. On the other hand, suppose that $F$ is a finitely generated monoid, say $F = \langle S \rangle$ for some finite subset $S$ of $F^\bullet$. Then the fact that $F^\bullet = S + F$ immediately implies that $F$ is finitary. In particular, every nontrivial finitely generated Puiseux monoid is finitary and, therefore, strongly primary. The next proposition summarizes the observations made in this paragraph.

\begin{prop} \hfill
	\begin{enumerate}
		\item Every nontrivial Puiseux monoid is primary.
		\vspace{2pt}
		\item Every finitely generated Puiseux monoid is strongly primary.
	\end{enumerate}
\end{prop}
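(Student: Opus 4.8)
The plan is to prove both parts directly, since the paragraph preceding the proposition has already laid out the essential ideas and the proof amounts to turning those observations into clean arguments. For part~(1), I would take a nontrivial Puiseux monoid $M$ and a divisor-closed submonoid $S$ with $S \neq \{0\}$, and show $S = M$. Picking any $s \in S^\bullet$ and any $x \in M^\bullet$, the key observation is that $x$ always divides some multiple of $s$ that lies in $\langle s \rangle$: indeed, $\mathsf{n}(x)\mathsf{d}(s)\, s \in M$ is a positive integer multiple of $s$ (hence in $\langle s \rangle \subseteq S$), and since its numerator is divisible by $\mathsf{n}(x)$ one checks that $x \mid_M \mathsf{n}(x)\mathsf{d}(s)\,s$. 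Because $S$ is divisor-closed and $\mathsf{n}(x)\mathsf{d}(s)\,s \in S$, this forces $x \in S$. As $x \in M^\bullet$ was arbitrary, $S = M$, so the only divisor-closed submonoids are $\{0\}$ and $M$, and $M$ is primary by definition.

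For part~(2), let $F$ be a nontrivial finitely generated Puiseux monoid. Since every nontrivial finitely generated Puiseux monoid is reduced and hence equals $\langle A \rangle$ for a finite set $A$ of atoms, and since numerical-monoid-type finiteness gives that $F$ has only finitely many atoms, one knows $F$ is a BF-monoid (a finitely generated reduced monoid has finite sets of lengths). To verify the finitary condition, take $S = A$, a finite subset of $F^\bullet$. Every element of $F^\bullet$ is a sum of atoms, so each $x \in F^\bullet$ can be written as $x = a + f$ with $a \in A$ and $f \in F$; that is, $F^\bullet = S + F$. Taking $n = 1$ gives $n F^\bullet \subseteq S + F$, so $F$ is finitary. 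Being both primary (by part~(1)) and finitary, $F$ is strongly primary.

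I do not expect a genuine obstacle here, since the structure is essentially spelled out in the text; the only point requiring a little care is the divisibility claim $x \mid_M \mathsf{n}(x)\mathsf{d}(s)\,s$ in part~(1). The clean way to see this is to note that $\mathsf{n}(x)\mathsf{d}(s)\,s - x = x\bigl(\mathsf{d}(x)\mathsf{d}(s)\,s/\mathsf{n}(x) - 1\bigr)$ is a nonnegative rational, and more importantly that $\mathsf{n}(x)\mathsf{d}(s)\,s$ is an integer multiple of $x$ whose quotient lies in $\qq_{\ge 0}\cap M$; one verifies the complement $\mathsf{n}(x)\mathsf{d}(s)\,s - x$ is itself a nonnegative rational that can be realized inside $M$, for instance because it is again a rational multiple of $s$. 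The remaining steps (that $F^\bullet = S + F$ and that finitely generated reduced monoids are BF-monoids) are standard and follow immediately from the definitions recalled in the Background section.
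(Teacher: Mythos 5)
Your argument is correct and essentially identical to the paper's: part (1) is the paper's observation (stated there contrapositively, as ``no nontrivial proper submonoid is divisor-closed'') that any $x \in M^\bullet$ divides the element $\mathsf{n}(x)\mathsf{d}(s)s$ of $\langle s \rangle$ in $M$, and part (2) is the paper's argument that $F^\bullet = S + F$ for a finite generating set $S$, with $n = 1$. The only blemish is in your final paragraph: the identity should read $\mathsf{n}(x)\mathsf{d}(s)s - x = \big(\mathsf{d}(x)\mathsf{d}(s)s - 1\big)x$ (no division by $\mathsf{n}(x)$), and this difference lies in $M$ because it is a nonnegative \emph{integer} multiple of $x \in M$, not ``because it is again a rational multiple of $s$'' (rational multiples of $s$ need not lie in $M$); the integer-multiple observation already made in your first paragraph is the correct and sufficient justification.
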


A Puiseux monoid that is not finitely generated may fail to be strongly primary (see, for example, Proposition~\ref{prop:a family of non-finitary PM}). However, in Proposition~\ref{prop:family of strongly primary PM} and Proposition~\ref{prop:multiplicatively cyclic PM are finitary} we exhibit two infinite families of non-finitely generated Puiseux monoids that are strongly primary. To argue Proposition~\ref{prop:family of strongly primary PM}, we will use the following result, which is a weaker version of~\cite[Proposition 4.5]{fG16}.

\begin{prop} \label{prop:sufficient condition for a PM to be BF}
	If $M$ is a Puiseux monoid satisfying that $0$ is not a limit point of~$M^\bullet$, then $M$ is a BF-monoid.
\end{prop}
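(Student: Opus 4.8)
The plan is to extract from the hypothesis a uniform positive lower bound on the nonzero elements of $M$ and then convert that bound into a bound on factorization lengths. Since $0$ is not a limit point of $M^\bullet$, there is some $\varepsilon \in \rr_{>0}$ with $M^\bullet \cap (0,\varepsilon) = \emptyset$; equivalently, every nonzero element $x \in M$ satisfies $x \ge \varepsilon$. I would also record at the outset that a Puiseux monoid is reduced (if $a + b = 0$ with $a,b \in \qq_{\ge 0}$, then $a = b = 0$), so that $M^\times = \{0\}$ and the atoms of $M$ are precisely those nonzero elements that cannot be written as a sum of two nonzero elements.

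The single inequality driving everything is the following: if $x = a_1 + \dots + a_n$ with each $a_i \in M^\bullet$, then $x = \sum_{i=1}^n a_i \ge n \varepsilon$, and hence $n \le x/\varepsilon$. First I would use this to show that $M$ is atomic. Fixing $x \in M^\bullet$, the number of summands in any decomposition of $x$ into nonzero elements is at most $\lfloor x/\varepsilon \rfloor$, so such decompositions have bounded length; choosing one of maximal length, no summand can fail to be an atom, since otherwise that summand would split into two nonzero elements and produce a strictly longer decomposition. Thus $x$ is a sum of atoms, whence $M = \langle \mathcal{A}(M) \rangle$.

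With atomicity in hand, the same inequality closes the argument: for any $x \in M^\bullet$ and any factorization $z = a_1 \cdots a_n \in \mathsf{Z}(x)$, applying the bound to $x = a_1 + \dots + a_n$ gives $|z| = n \le x/\varepsilon$, so $\mathsf{L}(x) \subseteq \{1, \dots, \lfloor x/\varepsilon \rfloor\}$ is finite; together with $\mathsf{L}(0) = \{0\}$, this shows $M$ is a BF-monoid. I do not anticipate a serious obstacle here, as the entire content lies in translating the topological hypothesis into the numerical bound $x \ge \varepsilon$. The only point demanding a little care is the atomicity step, namely justifying that a decomposition of maximal length exists (which is exactly where $\lfloor x/\varepsilon \rfloor$ is used) and that it must consist of atoms, since the paper's definition of a BF-monoid should be read as presupposing that every element admits a factorization.
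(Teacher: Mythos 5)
Your proof is correct. Note that the paper itself gives no proof of this proposition --- it is quoted as a weaker version of \cite[Proposition~4.5]{fG16} --- and your argument (extracting a uniform lower bound $\varepsilon$ on $M^\bullet$, bounding any decomposition of $x$ into nonzero summands by $x/\varepsilon$, and using a maximal-length decomposition to get atomicity before bounding $\mathsf{L}(x)$) is exactly the standard argument behind that cited result, so you have in effect supplied the proof the paper delegates to its reference.
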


The next two propositions introduce two families of (non-finitely generated) strongly primary Puiseux monoids.

\begin{prop} \label{prop:family of strongly primary PM}
	Let $p, q \in \nn$ such that $\gcd(p,q) = 1$, and let $\{S_n\}$ be an inclusion-decreasing sequence of numerical monoids. If a function $f \colon \nn \to \nn$ satisfies that $f(1) = 1$ and $q^{f(n+1) - f(n)} - p^n > p \max \{F(S_n), a \mid a \in \mathcal{A}(S_n) \}$ for every $n \in \nn$, then the Puiseux monoid
	\[
		\bigg\langle \frac{q^{f(n)}}{p^n} s \ \bigg{|} \ n \in \nn \ \text{ and} \ s \in S_n \bigg\rangle
	\]
	is strongly primary.
\end{prop}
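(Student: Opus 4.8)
The plan is to separate the two ingredients of strong primariness. Since $M$ is a nontrivial Puiseux monoid, it is automatically primary by the preceding proposition, so everything reduces to proving that $M$ is finitary, i.e., that $M$ is a BF-monoid and that $N M^\bullet \subseteq S + M$ for some $N \in \nn$ and some finite $S \subseteq M^\bullet$. Throughout I write $a_n := q^{f(n)}/p^n$, so that $M = \langle a_n s \mid n \in \nn,\ s \in S_n\rangle$ and $a_1 = q/p$. From the hypothesis I will use only two consequences: for every $n$ one has $q^{f(n+1)-f(n)} > p^n$, which forces $f$ to be strictly increasing and $a_{n+1} > a_n$; and, applied at level $n-1$, one has $q^{f(n)-f(n-1)} \ge p^{\,n-1}$.

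For the BF part I will invoke Proposition~\ref{prop:sufficient condition for a PM to be BF}. Because $a_n$ is increasing and every nonzero element of a numerical monoid is at least $1$, each nonzero generator satisfies $a_n s \ge a_n \ge a_1 = q/p$; as every nonzero element of $M$ is a sum of generators, at least one of which is nonzero, no element of $M^\bullet$ is smaller than $q/p$. Hence $0$ is not a limit point of $M^\bullet$, and $M$ is a BF-monoid.

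For the covering condition I will take $N = p$ and $S = \{(q/p)\alpha \mid \alpha \in \mathcal{A}(S_1)\}$, which is finite because the numerical monoid $S_1$ has finitely many atoms. The engine is the pair of identities $p\,a_n s = q^{f(n)-f(n-1)}\,a_{n-1} s$ (valid for $n \ge 2$, where $s \in S_n \subseteq S_{n-1}$ because the sequence $\{S_n\}$ is inclusion-decreasing) together with $p^{\,n-1} a_{n-1} s = q^{f(n-1)} s \in \nn$. The idea is to cash in exactly $p^{\,n-1}$ of the $q^{f(n)-f(n-1)}$ copies of the generator $a_{n-1}s$ occurring in $p\,a_n s$ to form the single integer $q^{f(n-1)}s = (q/p)\,(q^{f(n-1)-1} p\,s)$; since $q^{f(n-1)-1} p\,s$ is a positive integer multiple of $s \in S_1$ it lies in $S_1^\bullet$, hence it is divisible in $S_1$ by some atom $\alpha$, and this shows $(q/p)\alpha \mid_M q^{f(n-1)}s$. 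The remaining $q^{f(n)-f(n-1)} - p^{\,n-1}$ copies of $a_{n-1}s$ stay in $M$ precisely because the growth hypothesis makes that count nonnegative. Thus $p\,a_n s \in S + M$, and the case $n = 1$ is the direct computation $p\,a_1 s = q s = (q/p)(p s)$ with $p s \in S_1^\bullet$. Finally, for an arbitrary $x \in M^\bullet$ I will write $x$ as a sum of generators, single out one nonzero generator $a_{n^*} s_{n^*}$, and observe that $p x = p\,a_{n^*} s_{n^*} + \big(\text{a sum of terms } p\,a_n s_n\big)$; the first summand already lies in $S + M$ while the remaining sum lies in $M$, so $p x \in S + M$.

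Combining the two parts shows that $M$ is finitary, and together with primariness this yields strong primariness. The one genuinely non-routine point is identifying the correct fixed divisor set — the level-one ``atoms'' $(q/p)\mathcal{A}(S_1)$ — and the cashing-in trick that rewrites a controlled number of copies of a high-level generator as an integer divisible by a level-one atom. The growth hypothesis itself enters only through the mild inequalities $q^{f(n+1)-f(n)} \ge p^n$ (nonnegativity of the leftover count) and $q^{f(n+1)-f(n)} > p^n$ (monotonicity of $a_n$), so its full strength involving $F(S_n)$ and $\mathcal{A}(S_n)$ is not needed for strong primariness and presumably serves the atomicity statements elsewhere.
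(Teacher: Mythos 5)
Your proof is correct, and its skeleton coincides with the paper's: primariness is automatic for a nontrivial Puiseux monoid, the BF property follows from Proposition~\ref{prop:sufficient condition for a PM to be BF} because $0$ is not a limit point of $M^\bullet$, and finitariness is obtained from the same level-shifting identity (in your notation $a_n = q^{f(n)}/p^n$), namely $p\,a_{n+1}s = q^{f(n)}s + \bigl(q^{f(n+1)-f(n)}-p^n\bigr)a_n s$. The genuine differences lie in how the two summands are certified. The paper takes $S=\mathcal{A}(M\cap\nn_0)\cup q\mathcal{A}(S_1)$ and works with atoms $a\in\mathcal{A}(S_{n+1})$: the integer summand $q^{f(n)}a$ is divisible by an element of $S$ because $M\cap\nn_0$ is atomic with finitely many atoms, and the leftover is shown to be a single generator of $M$ via the Frobenius bound, i.e.\ $\bigl(q^{f(n+1)-f(n)}-p^n\bigr)a>F(S_n)$ forces that element into $S_n$; this is exactly where the hypothesis on $F(S_n)$ enters. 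You instead take $S=(q/p)\mathcal{A}(S_1)$, work with arbitrary $s\in S_n$, certify the integer summand by factoring $q^{f(n-1)}s=(q/p)\bigl(pq^{f(n-1)-1}s\bigr)$ and extracting an atom of $S_1$ (legitimate since $S_n\subseteq S_1$), and certify the leftover as a nonnegative integer multiple of the generator $a_{n-1}s$ (legitimate since $s\in S_n\subseteq S_{n-1}$), which requires only nonnegativity of the count. The payoff of your route is real: it never invokes $F(S_n)$ or $\max\mathcal{A}(S_n)$, so it establishes the conclusion under the weaker hypothesis $q^{f(n+1)-f(n)}\ge p^n$ for all $n$, and your closing observation to this effect is accurate; the paper's proof, by contrast, uses the full strength of the hypothesis both in its BF step (listing the generators as an increasing sequence) and in placing the leftover inside $S_n$.
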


\begin{proof}
	Set $M = \big\langle q^{f(n)}s/p^n  \mid n \in \nn \ \text{and} \ s \in S_n \big\rangle$, and for each $n \in \nn$ set $A_n = \mathcal{A}(S_n)$. First, we argue that $M$ is a BF-monoid. To do so, observe that for each $n \in \nn$ the fact that $q^{f(n+1) - f(n)} > p \max A_n$ implies that
	\begin{equation} \label{A_n's}
		\min \frac{q^{f(n+1)}}{p^{n+1}} A_{n+1} \ge \frac{q^{f(n+1)}}{p^{n+1}} > \frac{q^{f(n)}}{p^n} \max A_n.
	\end{equation}
	Therefore the generating set $\cup_{n \in \nn} (q^{f(n)}/p^n) A_n$ of $M$ can be listed as an increasing sequence of rational numbers. As $M$ is generated by an increasing sequence of rational numbers, $0$ cannot be a limit point of $M^\bullet$. Hence $M$ is a BF-monoid by Proposition~\ref{prop:sufficient condition for a PM to be BF}.
	
	We proceed to prove that $M$ is finitary. Since $M \cap \nn_0$ is a submonoid of $(\nn_0,+)$, it is atomic and $\mathcal{A}(M \cap \nn_0)$ is finite. Take $S$ to be the finite set $\mathcal{A}(M \cap \nn_0) \cup q A_1 \subset M$. We are done once we verify the inclusion $p M^\bullet \subseteq S + M$. Fix $n \in \nn$ and $a \in A_{n+1}$. Because $q^{f(n)}a \in q^{f(n)}S_{n+1} \subseteq q^{f(n)}S_n  \in M \cap \nn_0$, the element $q^{f(n)}a$ is divisible in $M$ by an element of $S$. On the other hand, $(q^{f(n+1) - f(n)} - p^n)a \ge q^{f(n+1) - f(n)} - p^n > F(S_n)$, which implies that $(q^{f(n+1) - f(n)} - p^n)a \, \frac{q^{f(n)}}{p^n} \in M$. Thus,
	\[
		p\bigg( \frac{q^{f(n+1)}}{p^{n+1}} a \bigg) = q^{f(n)}a + \big(q^{f(n+1) - f(n)} - p^n\big) \frac{q^{f(n)}}{p^n} a \in S + M.
	\]
	In addition, the fact that $qa \in qS_{n+1} \subseteq qS_1$ guarantees $qa$ is divisible in $M$ by some element of $S$. This, in turn, implies that $p \big(q^{f(1)}/p\big)a = qa \in S + M$. As a result, for each $x \in M^\bullet$ it follows that $px \in m(S+M) \subseteq S + M$ for some $m \in \nn$. Hence $pM^\bullet \subseteq S + M$, as desired. 
\end{proof}

\begin{example}
	Consider the Puiseux monoid
	\[
		M = \bigg \langle \frac{3^{n^2 + 1}}{2^n}, \frac{5 \cdot 3^{n^2}}{2^n} \ \bigg{|} \ n \in \nn \bigg \rangle.
	\]
	Taking $S_n$ to be the numerical monoid $\langle 3, 5 \rangle$ for each $n \in \nn$ and defining the function $f \colon \nn \to \nn$ by $f(n) = n^2$, we can rewrite the Puiseux monoid $M$ as follows:
	\[
		M = \bigg\langle \frac{3^{f(n)}}{2^n} s \ \bigg{|} \ n \in \nn \ \text{ and} \ s \in S_n \bigg\rangle.
	\]
	Since $F(S_n) = 7$ for each $n \in \nn$, it follows that
	\[
		3^{f(n+1) - f(n)} - 2^n  = 3^{2n+1} - 2^n > 14 = 2 \max \{3,5, F(S_n)\}.
	\]
	As $f(1) = 1$, Proposition~\ref{prop:family of strongly primary PM} guarantees that $M$ is a strongly primary Puiseux monoid.
\end{example}
\medskip

As in Section~\ref{sec:Morphisms between PMs}, for $r \in \qq_{> 0}$ we let $M_r$ denote the multiplicatively $r$-cyclic Puiseux monoid $\langle r^n \mid n \in \nn \rangle$.

\begin{prop} \label{prop:multiplicatively cyclic PM are finitary}
	For each $r \in \qq_{> 1}$, the Puiseux monoid $M_r$ is strongly primary.
\end{prop}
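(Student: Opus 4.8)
The plan is to verify separately the two ingredients of strong primarity, namely primarity and the finitary condition, and to spend essentially all the effort on the latter. Primarity is immediate: since $M_r$ is a nontrivial Puiseux monoid, it is primary by the proposition above (every nontrivial Puiseux monoid is primary). Moreover, if $r \in \nn$ then $M_r = \langle r^n \mid n \in \nn \rangle = \langle r \rangle \cong \nn_0$ is finitely generated and hence already strongly primary by that same proposition, so from now on I would assume $\mathsf{d}(r) > 1$; by \cite[Theorem~6.2]{GG17} this guarantees that $M_r$ is atomic with $\mathcal{A}(M_r) = \{r^n \mid n \in \nn\}$, although the finitary argument below really only uses that $\{r^n \mid n \in \nn\}$ generates $M_r$.

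To see that $M_r$ is a BF-monoid, I would note that $r > 1$ forces $r^n \ge r > 1$ for every $n \in \nn$, so each nonzero element of $M_r$, being a sum of at least one generator, is bounded below by $r$. Hence $0$ is not a limit point of $M_r^\bullet$, and $M_r$ is a BF-monoid by Proposition~\ref{prop:sufficient condition for a PM to be BF}.

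For the finitary condition I would take $n = \mathsf{d}(r)$ and the single-element set $S = \{r\} \subseteq M_r^\bullet$, and prove the inclusion $\mathsf{d}(r)\, M_r^\bullet \subseteq S + M_r$. Writing $a = \mathsf{n}(r)$ and $b = \mathsf{d}(r)$, the key identity is $b r^m = a r^{m-1}$, which is just $r = a/b$; since $a > b$, this rearranges into the telescoping relation $a r^m = (a-b) r^m + a r^{m-1}$. Iterating it down to exponent $0$ yields, for each $i \in \nn$,
\[
	b r^{i} = a r^{i-1} = (a - b) \sum_{j=1}^{i-1} r^j + a .
\]
Because $a = b r \in M_r$ and $a - b \ge 1$, the atom $r$ occurs on the right-hand side with positive coefficient, so $r \mid_{M_r} b r^{i}$ for every $i$. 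Given an arbitrary $x = \sum_i c_i r^i \in M_r^\bullet$, multiplying by $b$ and distributing writes $bx$ as a sum of the terms $b r^{i}$; choosing any index with $c_i \ge 1$ produces one summand divisible by $r$, whence $bx \in r + M_r$. This gives $b\, M_r^\bullet \subseteq S + M_r$, so $M_r$ is finitary, and combined with primarity it is strongly primary.

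The step I expect to be the main obstacle is precisely this last inclusion for the ``pure high-power'' elements $x = r^K$: after multiplying by $b$ the element $b r^K = a r^{K-1}$ is very large, and naively subtracting the fixed atom $r$ from it need not return an element of $M_r$, since Puiseux monoids typically have gaps. The telescoping identity above is what dissolves this difficulty, as it rewrites $a r^{K-1}$ as an explicit $\nn_0$-combination of the low atoms $r, r^2, \dots, r^{K-1}$ together with the integer $a = br$, making the divisibility by $r$ (and the fact that the quotient lies in $M_r$) completely transparent.
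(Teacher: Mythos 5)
Your proof is correct and takes essentially the same route as the paper's: the same constant $n = \mathsf{d}(r)$, the same telescoping identity $\mathsf{n}(r)r^{j} - \mathsf{n}(r) = \big(\mathsf{n}(r)-\mathsf{d}(r)\big)\sum_{i=1}^{j} r^{i}$ establishing the key divisibility, and the same reduction of an arbitrary element $\sum_i c_i r^i$ to a single power $r^k$ with $c_k \ge 1$. The only cosmetic differences are your choice $S = \{r\}$ versus the paper's $S = \{\mathsf{n}(r)\}$ (interchangeable, since $\mathsf{n}(r) = \mathsf{d}(r)\,r$ is a sum of $\mathsf{d}(r)$ copies of $r$) and your harmless, strictly unnecessary separate treatment of the case $r \in \nn$.
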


\begin{proof}
	Since $r > 1$, it follows that $0$ is not a limit point of $M_r^\bullet$. Thus, Proposition~\ref{prop:sufficient condition for a PM to be BF} ensures that $M_r$ is a BF-monoid. On the other hand, it was proved in \cite{GG17} that $\mathcal{A}(M_r) = \{r^n \mid n \in \nn\}$. Now we check that $\mathsf{n}(r)$ divides $\mathsf{n}(r)r^j$ in $M_r$ for every $j \in \nn_0$. If $j=0$, then $\mathsf{n}(r) \mid_{M_r} \mathsf{n}(r)r^j$ follows trivially. Hence it suffices to assume that $j \in \nn$. In this case, the fact that $\mathsf{n}(r)(r-1) = r( \mathsf{n}(r) - \mathsf{d}(r))$ implies that
	\[
		\mathsf{n}(r) r^j - \mathsf{n}(r) = \mathsf{n}(r) (r-1) \sum_{i=0}^{j-1} r^i = r \big(\mathsf{n}(r) - \mathsf{d}(r)\big) \sum_{i=0}^{j-1} r^i = \big(\mathsf{n}(r) - \mathsf{d}(r)\big) \sum_{i=0}^{j-1} r^{i+1} \in M_r.
	\]
	Therefore $\mathsf{n}(r)$ divides $\mathsf{n}(r) r^j$ in $M_r$. To show now that $M_r$ is finitary, we take $n = \mathsf{d}(r)$ and $S = \{\mathsf{n}(r)\}$ and verify that $n M_r^\bullet \subseteq S + M_r$. For $q \in M_r^\bullet$, take $\alpha_1, \dots, \alpha_t \in \nn_0$ such that $q = \sum \alpha_i r^i$, and fix $k \in \{1, \dots, t\}$ such that $\alpha_k > 0$. Because $\mathsf{n}(r)$ divides $\mathsf{n}(r) r^{k-1}$ in $M_r$, there exists $s \in M_r$ satisfying that $\mathsf{n}(r) r^{k-1} = \mathsf{n}(r) + s$. Thus,
	\begin{align*}
		n q &= \mathsf{d}(r) \alpha_k r^k + \sum_{i \neq k} \mathsf{d}(r) \alpha_i r^i \\
			  &= \mathsf{n}(r) r^{k-1} + (\alpha_k - 1) \mathsf{d}(r) r^k + \sum_{i \neq k} \mathsf{d}(r) \alpha_i r^i \\
			  &= \mathsf{n}(r) + \big( s + (\alpha_k - 1) \mathsf{d}(r) r^k + \sum_{i \neq k} \mathsf{d}(r) \alpha_i r^i \big),
	\end{align*}
	which implies that $nq \in S + M_r$. Since $n M_r^\bullet \subseteq S + M_r$, one obtains that $M_r$ is finitary and, therefore, strongly primary.
\end{proof}

We conclude this section providing a family of Puiseux monoids that fail to be strongly primary.

\begin{prop} \label{prop:a family of non-finitary PM}
	Let $\{a_n\}$ be a sequence of positive rational numbers satisfying that $\gcd(\mathsf{d}(a_i), \mathsf{d}(a_j)) = 1$ for any $i \neq j$. Then the Puiseux monoid $\langle a_n \mid n \in \nn \rangle$ is atomic but not strongly primary.
\end{prop}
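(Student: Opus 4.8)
The plan is to establish the two assertions separately: first that $M := \langle a_n \mid n \in \nn \rangle$ is atomic, and then that $M$ fails to be finitary. Since every nontrivial Puiseux monoid is primary by the preceding proposition, the failure of finitariness is exactly what certifies that $M$ is not strongly primary. Throughout I write $d_n := \mathsf{d}(a_n)$ and exploit the single structural feature of the hypothesis: because the $d_n$ are pairwise coprime, each prime $p$ divides \emph{at most one} of them. I note at the outset that $M$ is non-finitely-generated precisely when infinitely many $d_n$ exceed $1$ (the remaining case is finitely generated and hence strongly primary), so I assume infinitely many $d_n > 1$.

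For atomicity, I would first show that every generator $a_m$ with $d_m > 1$ is an atom. Fix a prime $p \mid d_m$; then $\pval(a_m) = -\pval(d_m) < 0$, whereas $\pval(a_i) \ge 0$ for every $i \neq m$, since $p \nmid d_i$. If $a_m = u + v$ with $u,v \in M^\bullet$, then combining factorizations yields $a_m = \sum_i e_i a_i$ with $e_i \in \nn_0$ and $\sum_i e_i \ge 2$. The ultrametric behaviour of $\pval$ rules this out by a short case check on $e_m$: if $e_m = 0$ the right-hand side has nonnegative valuation; if $e_m = 1$ positivity forces all other $e_i = 0$; and if $e_m \ge 2$ then $\sum_i e_i a_i \ge 2a_m > a_m$. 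Hence $a_m \in \mathcal{A}(M)$. The integer elements of $M$ form the submonoid $N = M \cap \nn_0$ of $(\nn_0,+)$, which is finitely generated and therefore atomic; comparing integer and fractional parts (and again using pairwise coprimality) shows that each atom of $N$ is either already an atom of $M$ or is itself a sum of fractional atoms $a_i$ with $d_i > 1$. Writing an arbitrary $x \in M^\bullet$ as its fractional-generator part plus an element of $N$, I conclude $M = \langle \mathcal{A}(M) \rangle$, i.e.\ $M$ is atomic.

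For the failure of finitariness I argue by contradiction: suppose there exist $n \in \nn$ and a finite set $S \subseteq M^\bullet$ with $nM^\bullet \subseteq S + M$. Let $D$ be the finite union of the supports of the elements of $S$, so that $\mathsf{d}(s) \mid \prod_{i \in D} d_i$ for all $s \in S$; consequently $\pval(s) \ge 0$ for every prime $p$ dividing some $d_m$ with $m \notin D$. Because $n!$ has only finitely many prime factors and the $d_i$ are pairwise coprime, infinitely many indices $m$ satisfy $d_m > 1$, $m \notin D$, and $\gcd(d_m, n!) = 1$; fix such an $m$ and a prime $p \mid d_m$, and observe that $p \nmid k$ gives $\pval(k a_m) = \pval(a_m) < 0 \le \pval(s)$ for all $1 \le k \le n$ and all $s \in S$. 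Applying the hypothesis to $x = a_m$ gives $n a_m = s + y$ for some $s \in S$ and $y \in M$. Now for any equation $k a_m = s + r$ with $r \in M$ and $1 \le k \le n$, the valuation identity $\pval(r) = \pval(k a_m - s) = \pval(a_m) < 0$ forces $a_m$ into the support of $r$ (it is the only generator whose denominator is divisible by $p$), so $r - a_m \in M$ and $(k-1)a_m = s + (r - a_m)$. Peeling off one copy of $a_m$ at a time with the \emph{same} $s$, I descend from $k = n$ to $k = 1$ and reach $a_m = s + r'$ with $r' \in M$; since $a_m$ is an atom and $s \in M^\bullet$ is nonzero, this forces $r' = 0$ and $a_m = s \in S$. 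As $m$ ranges over infinitely many indices producing pairwise distinct atoms while $S$ is finite, choosing $m$ with $a_m \notin S$ gives a contradiction. Thus no such $n, S$ exist, $M$ is not finitary, and hence not strongly primary.

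I expect the non-finitary half to be the main obstacle. The delicate points are choosing the test atom $a_m$ so that its denominator is coprime to $n!$ and avoids all primes occurring in $S$, and then verifying that a \emph{single fixed} $s \in S$ persists through the entire peeling process down to a lone copy of $a_m$. Pairwise coprimality of the denominators is precisely the lever that makes the $p$-adic bookkeeping force $a_m$ back into the remainder at every step, converting the finitariness hypothesis into the absurd conclusion that infinitely many distinct atoms lie in the finite set $S$.
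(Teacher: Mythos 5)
Your proof is correct, and its engine is the same as the paper's: apply the finitary condition $nM^\bullet \subseteq S+M$ to a test atom $a_m$ whose denominator is large and coprime to everything in sight, then exploit pairwise coprimality of the denominators via $p$-adic bookkeeping on the resulting equation. The execution differs in how the contradiction is extracted. The paper first replaces $S$ by a finite set of atoms, expands $n a_j = a_s + \sum_i \alpha_i a_i$, and applies the $\mathsf{d}(a_j)$-valuation once: this forces $\mathsf{d}(a_j) \mid n - \alpha_j$, and since $j$ was chosen with $\mathsf{d}(a_j) > \max\{n, \max \mathsf{d}(S)\}$, it follows that $\alpha_j = n$ and hence $a_s = 0$ --- an immediate contradiction from a single test element. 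You instead peel off copies of $a_m$ one at a time (each peeling step being the same ultrametric observation the paper makes once), arrive at $a_m = s \in S$, and then invoke a pigeonhole argument over infinitely many distinct atoms versus the finite set $S$. Your route is longer but equally valid; the paper's one-shot divisibility computation is what your $n$-fold descent compresses to. Two things you do are genuine improvements on the text. First, you actually prove atomicity (the fractional generators are atoms by the valuation argument, and the integer part is handled through $N = M \cap \nn_0$), where the paper leaves the details to the reader. Second, you make explicit the standing hypothesis that infinitely many $\mathsf{d}(a_n)$ exceed $1$. That hypothesis is absent from the statement but is genuinely needed: a sequence consisting entirely of positive integers satisfies the coprimality hypothesis vacuously, yet then $M$ is finitely generated and hence strongly primary, so the proposition as literally stated fails in that degenerate case. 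The paper's own proof silently uses this assumption when it asserts the existence of $j$ with $\mathsf{d}(a_j) > \max\{n, \max \mathsf{d}(S)\}$, which requires the denominators to be unbounded; your explicit reduction is the honest way to phrase the result.
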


\begin{proof}
	Set $M = \langle a_n \mid n \in \nn \rangle$. It is not difficult to verify that $\mathcal{A}(M) = \{a_n \mid n \in \nn\}$ from the fact that $\gcd(\mathsf{d}(a_i), \mathsf{d}(a_j)) = 1$ for any $i \neq j$; we leave the details to the reader. This implies, in particular, that $M$ is atomic.
	
	Suppose, by way of contradiction, that $M$ is finitary. Choose $n \in \nn$ and a finite subset $S$ of $M^\bullet$ such that $n M^\bullet \subseteq S + M$. Let $S'$ be a finite subset of $\mathcal{A}(M)$ such that for each $s \in S$ there is at least one atom in $S'$ dividing $s$ in $M$. After substituting $S$ by $S'$, we can assume that $S \subset \mathcal{A}(M)$. Since $n M^\bullet \subseteq S + M$ and $\inf (S+M) \ge \min S$, it follows that $0$ cannot be a limit point of $M^\bullet$. Fix $\epsilon > 0$ such that $\epsilon < \inf M^\bullet$. Since $\gcd(\mathsf{d}(a_i), \mathsf{d}(a_j)) = 1$, there exists $j \in \nn$ such that $\mathsf{d}(a_j) > \max \{n, \max \mathsf{d}(S)\}$. Because $n M^\bullet \subseteq S + M$, one can write
	\begin{equation} \label{eq:finitary}
		n a_j = a_s + \sum_{i=1}^k \alpha_i a_i
	\end{equation}
	for some $k \in \nn$, $a_s \in S$, and $\alpha_i \in \nn_0$. Applying the $\mathsf{d}(a_j)$-valuation to both sides of~(\ref{eq:finitary}) and using the fact that $\gcd(\mathsf{d}(a_i), \mathsf{d}(a_j)) = 1$, it is not hard to find that $\mathsf{d}(a_j)$ divides $n - \alpha_j$. Now $\mathsf{d}(a_j) > n$ yields $n = \alpha_j$. This, along with~(\ref{eq:finitary}), would force $a_s = 0$, which contradicts that $S \subset \mathcal{A}(M)$. Thus, $M$ is not strongly primary.
\end{proof}

\begin{remark}
	The previous proposition not only shows that Puiseux monoids are not, in general, strongly primary, but also illustrates that a natural bounding, ordering, or topological restriction under which a Puiseux monoid is guaranteed to be strongly primary is rather unlikely. For example, consider the Puiseux monoids
	\[
		M_1 = \bigg\langle \frac{1}{p} \ \bigg{|} \ p \ \text{is prime} \bigg\rangle, \
		M_2 = \bigg\langle \frac{p-1}{p} \ \bigg{|} \ p \ \text{is prime} \bigg\rangle, \ \text{and} \
		M_3 = \bigg\langle \frac{p^2+1}{p} \ \bigg{|} \ p \ \text{is prime} \bigg\rangle.
	\]
	It is not hard to check that the sets of atoms of $M_1$, $M_2$, and $M_3$ are precisely the generating sets displayed. Therefore $M_1$ is strongly bounded, $M_2$ is bounded, and $M_3$ is not bounded. We can also see that $M_1$ is a decreasing Puiseux monoid, while $M_2$ is increasing. Furthermore, notice that $0$ is a limit point of $M_1^\bullet$, but $0$ is not a limit point of $M_2^\bullet$. Finally, Proposition~\ref{prop:a family of non-finitary PM} ensures that $M_1$, $M_2$, and $M_3$ are all strongly primary.
\end{remark}
\medskip

%%%%%%%%%%%%%%%%%%%%%%%%%%%%%%%%%%%%%%%%%%%%%%%%%%%%%%%%%
\section{Puiseux Monoids Are Almost Never Transfer Krull} \label{sec:PM are not transfer Krull}

We dedicate this section to show that the atomic structure of Puiseux monoids almost never can be obtained by transferring back that one of Krull monoids; specifically we shall prove that the existence of a transfer homomorphism from a nontrivial Puiseux monoid to a Krull monoid forces the domain to be isomorphic to $(\nn_0,+)$. The we use this information to show that only finitely generated Puiseux monoids admit transfer homomorphisms to C-monoids. Let us start by giving the definition of a Krull monoid. 

\begin{definition} \label{def:Krull monoid}
	A monoid $K$ is called a \emph{Krull monoid} if there is a monoid homomorphism $\varphi \colon K \to D$, where $D$ is a free abelian monoid and $\varphi$ satisfies the following two conditions:
	\begin{enumerate}
		\item if $a, b \in K$ and $\varphi(a) \mid_D \varphi(b)$, then $a \mid_K b$; \vspace{5pt}
		\item for every $d \in D$ there exist $a_1, \dots, a_n \in K$ with $d = \gcd\{\varphi(a_1), \dots, \varphi(a_n)\}$.
	\end{enumerate}
\end{definition}

With notation as in Definition~\ref{def:Krull monoid}, it is easy to see that $K$ is a Krull monoid if and only if $K_{\text{red}}$ is a Krull monoid. The basis elements of $D$ are called the \emph{prime divisors} of $K$. The abelian group Cl$(K) := D/\varphi(K)$ is called the \emph{class group} of $K$ (see \cite[Section~2.3]{GH06b}). As Krull monoids are isomorphic to submonoids of free abelian monoids, Krull monoids are atomic.

%\noindent {\bf Examples of Krull monoids:}
%\begin{enumerate}
%	\item (Algebraic numbe theory) If $K$ is an algebraic number field, and let $\mathcal{O}_K$ be the ring of integers of $K$. Then the multiplicative monoid $\mathcal{O}_K^\bullet$ is a Krull monoid.
%	
%	\item (Hilbert monoids) Take $f \in \zz_{\ge 2}$, the multiplicative monoid $H_f := \{1 + f \nn_0\}$ is a Krull monoid. In particular, the \emph{Hilbert monoid} $H_1$ is an example of Krull monoid. The monoids $H_f$ belong to a larger family of Krull monoids, the \emph{regular congruence monoids} (see \cite[Section~2.11]{GH06b}).
%	
%	\item (Commutative algebra) In general, the multiplicative monoid of every integrally closed Noetherian domain is a Krull monoid. See \cite[Theorem~2.10.2]{GH06b} for a more refined version of the property just stated.
%	
%	\item (Module theory) Let $R$ be a semilocal ring, and let $\mathcal{C}$ be the class of all finitely generated projective $R$-modules. The set $V(\mathcal{C})$ of isomorphism classes of $\mathcal{C}$ is a Krull monoid under the operation $[A] \cdot [B] := [A \oplus B]$ (see \cite{FH00}).
%	
%	If instead $R$ is a commutative local Noetherian ring and $\mathcal{C}$ is the class of all finitely generated $R$-modules, then $V(\mathcal{C})$ is also a Krull monoid under the operation defined in the above paragraph (see \cite{rW01}).
%\end{enumerate}

The factorization theory of Krull monoids has been significantly studied (see \cite{CGP14,GS16} and references therein). The class of Krull monoids contains many well-studied types of monoids, including the multiplicative monoid of the ring of integers of an algebraic number, the Hilbert monoids, and the regular congruence monoids. These and further examples of Krull monoids are presented in \cite[Section~5]{GH06a} and \cite[Section~2.3]{GH06b}.

From the point of view of factorization theory, perhaps the most important family of Krull monoids is that one consisting of block monoids, which we are about to introduce. This is because block monoids capture the essence of the arithmetic of lengths of factorizations in Krull monoids. Let $G$ be an abelian group and $\mathcal{F}(G)$ the free abelian monoid on $G$. An element $X = g_1 \dots g_l \in \mathcal{F}(G)$ is called a \emph{sequence over} $G$. The \emph{length} of $X$ is defined as
\[
	|X| = l =\sum_{g \in G} \mathsf{v}_g(X).
\]
For every $I \subseteq [1, l]$, the sequence $Y = \prod_{i \in I} g_i$ is called a \emph{subsequence} of $X$. The subsequences are precisely the divisors of $X$ in the free abelian monoid $\mathcal{F}(G)$. The submonoid
\[
	\mathcal{B}(G) := \bigg\{ X \in \mathcal{F}(G) \ \bigg{|} \ \sum_{g \in G} \mathsf{v}_g(X) g = 0 \bigg\}
\]
of $\mathcal{F}(G)$ is called the \emph{block monoid} on $G$, and its elements are referred to as \emph{zero-sum sequences} or \emph{blocks over} $G$ (\cite[Section~2.5]{GH06b} is a good general reference on block monoids). Furthermore, if $G_0$ is a subset of $G$, then the submonoid
\[
	\mathcal{B}(G_0) := \{ X \in \mathcal{B}(G) \mid \mathsf{v}_g(X) = 0 \ \emph{ if } \ g \notin G_0 \}
\]
of $\mathcal{B}(G)$ is called the \emph{restriction} of the block monoid $\mathcal{B}(G)$ to $G_0$. For $X \in \mathcal{B}(G_0)$, the \emph{support} of $X$ in $G_0$ is defined to be
\[
	\text{supp}_{G_0}(X) := \{g \in G_0 \mid \mathsf{v}_g(X) > 0\}.
\]
As mentioned before, the relevance of block monoids in the theory of non-unique factorizations lies in the next result.

\begin{prop} \label{prop:relation of Krull monoids and block monoids}\cite[Theorem~3.4.10.3]{GH06b}
	Let $K$ be a Krull monoid with class group $G$ and let $G_0$ be the set of classes of $G$ which contain prime divisors. Then
	\[
		\mathcal{L}(K) = \mathcal{L}(\mathcal{B}(G_0)).
	\]
\end{prop}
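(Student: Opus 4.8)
The plan is to reduce the claim to Proposition~\ref{prop:transfer properties}(4) by producing a transfer homomorphism $\theta \colon K \to \mathcal{B}(G_0)$; once such a $\theta$ is available, part~(4) of that proposition immediately gives $\mathcal{L}(K) = \mathcal{L}(\mathcal{B}(G_0))$, since both monoids are atomic (Krull and block monoids are submonoids of free abelian monoids). First I would note that systems of sets of lengths are unchanged under passage to the associated reduced monoid and that $K$ is Krull if and only if $K_{\text{red}}$ is, so there is no loss in assuming $K$ reduced. Then condition~(1) of Definition~\ref{def:Krull monoid} forces the defining homomorphism $\varphi \colon K \to D = \mathcal{F}(P)$ to be injective (if $\varphi(a) = \varphi(b)$ then $a \mid_K b$ and $b \mid_K a$, so $a = b$ in the reduced case), and it identifies $K$ with a saturated submonoid of $D$ whose trivial-class elements are precisely those lying in $\varphi(K)$.

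Next I would build the block transfer homomorphism. Let $\beta \colon \mathcal{F}(P) \to \mathcal{F}(G_0)$ be the monoid homomorphism determined by sending each prime divisor $p \in P$ to its class $[p] \in G_0$, and set $\theta := \beta \circ \varphi$. The point is that for every $a \in K$ the element $\varphi(a)$ lies in $\varphi(K)$, i.e., in the trivial class, so the sum of the classes of the primes occurring in $\varphi(a)$ vanishes in $G$; hence $\theta(a) \in \mathcal{B}(G_0)$, and $\theta$ is a well-defined homomorphism into the block monoid.

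The core of the argument is verifying that $\theta$ satisfies (T1') and (T2). For surjectivity, given a block $B \in \mathcal{B}(G_0)$, each $g \in \text{supp}_{G_0}(B)$ contains a prime divisor by the definition of $G_0$, so I can select $p_g \in P$ with $[p_g] = g$ and form $d = \prod_{g} p_g^{\mathsf{v}_g(B)} \in D$; the zero-sum condition on $B$ makes the class of $d$ trivial, whence $d \in \varphi(K)$ and $\theta$ sends its $\varphi$-preimage to $B$. Since $\beta$ preserves length, $\theta(a) = 1$ forces $\varphi(a) = 1$ and hence $a = 1$, giving $\theta^{-1}(1) = \{1\}$. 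For (T2), suppose $\theta(a) = B_1 B_2$; writing $\varphi(a)$ as a product of primes and recalling that $\theta(a)$ is obtained from $\varphi(a)$ by relabelling each prime by its class, I would split the primes of $\varphi(a)$ class by class into factors $d_1 d_2 = \varphi(a)$ with $\beta(d_j) = B_j$, then write $d_j = \varphi(a_j)$ and use injectivity of $\varphi$ to obtain $a = a_1 a_2$ with $\theta(a_j) = B_j$.

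I expect the main obstacle to be precisely this lifting step (T2): one must split an abstract factorization $B_1 B_2$ living in $\mathcal{F}(G_0)$ back into a factorization of $\varphi(a)$ inside $D$, matching each class of $B_j$ to an actual prime of $\varphi(a)$ carrying that class, and then argue that each resulting factor returns to $K$. This is exactly where both defining properties of a Krull monoid enter: the zero-sum condition ensures each $B_j$ is itself a block, so each $d_j$ has trivial class; saturation (Definition~\ref{def:Krull monoid}(1)) then guarantees that a trivial-class element of $D$ belongs to $\varphi(K)$; and the gcd/prime-divisor condition (Definition~\ref{def:Krull monoid}(2)) guarantees that the classes in $G_0$ are genuinely realized by primes, so that surjectivity and the matching can be carried out.
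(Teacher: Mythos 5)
Your proposal is correct, and it reconstructs exactly the standard argument behind this result: the paper itself gives no proof but cites \cite[Theorem~3.4.10.3]{GH06b}, where the proof is precisely the one you outline, namely that the block homomorphism $\theta = \beta \circ \varphi \colon K \to \mathcal{B}(G_0)$ (with $\beta$ relabelling each prime divisor by its class) is a transfer homomorphism, after which Proposition~\ref{prop:transfer properties}(4) yields $\mathcal{L}(K) = \mathcal{L}(\mathcal{B}(G_0))$. Your handling of the key points---that saturation (Definition~\ref{def:Krull monoid}(1)) identifies the trivial-class elements of $D$ with $\varphi(K)$, that the zero-sum condition makes each factor in a splitting return to $K$, and that the definition of $G_0$ supplies the primes needed for surjectivity---is exactly what the cited proof requires.
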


As a consequence, understanding the arithmetic of lengths of factorizations in Krull monoids amounts to understanding the same in block monoids.

\begin{definition} \label{def:transfer Krull}
	A Puiseux monoid $M$ is \emph{transfer Krull} if there exist an abelian group $G$, a subset $G_0$ of $G$, and a transfer homomorphism $\theta \colon M \to \mathcal{B}(G_0)$.
\end{definition}

\noindent {\bf Remark:} Our definition of a transfer Krull monoid coincides with the definition given in \cite[Section 4]{aG16}; this is because in the present setting the concepts of a transfer homomorphism and the concept of a weak transfer homomorphism coincide by \cite[Lemma 2.3.(3)]{BS15}.

We denote the field of fractions of an integral domain $R$ by $\mathsf{q}(R)$. For subsets $X,Y$ of $\mathsf{q}(R)$ we set $(X : Y) := \{x \in \mathsf{q}(R) \mid xY \subseteq X\}$. In addition, $R$ is called a \emph{Krull domain} if $R^\bullet$ is a Krull monoid. In this case, the \emph{divisor class group} of $R$, denoted by $\mathcal{C}(R)$, measures the extent to which factorizations in $R$ fail to be unique (see \cite[Section~2.10]{GH06b}). Unlike Krull domains/monoids, which have been central objects in commutative algebra since mid-nineteenth century, transfer Krull monoids (which generalize the concept of Krull monoids) were introduced more recently. Let us proceed to present a few examples of transfer Krull monoids.
\medskip \\

\noindent {\bf Examples of transfer Krull monoids:}
\begin{enumerate}
	
	\item Let $H$ be a half-factorial monoid, and let $\theta \colon H \to \mathcal{B}(\{0\})$ be the map defined by $\theta(h) = 0$ if $h \in \mathcal{A}(H)$ and $\theta(h) = 1$ if $h \in H^\times$. As the map $\theta$ is a transfer homomorphism, it follows that $H$ is a transfer Krull monoid.
	
	\item Let $R$ be a Krull domain, and let $K$ be a subring of $R$ with the same field of fractions. Suppose, in addition, that the following three conditions hold:
	\begin{enumerate}
		\item $R = KR^\times$;
		\item $K \cap R^\times = K^\times$;
		\item $(K : R)$ is a maximal ideal of $K$ (see \cite[Proposition~3.7.5]{GH06b}).
	\end{enumerate} 
	Then the inclusion map $K^\bullet \hookrightarrow R^\bullet$ is a transfer homomorphism and, therefore, $K^\bullet$ is a transfer Krull monoid.
	
	\item Transfer Krull monoids can also be defined in a non-commutative context (see, for instance, \cite{BS15}). Let $R$ be a bounded HNP (hereditary Noetherian prime) ring. If every stably free left $R$-ideal is free, then $R^\bullet$ is a transfer Krull monoid (see \cite[Theorem~4.4]{dS16} for details).
	
%	\item (Module theory) Let $R$ be a semilocal ring, and let $\mathcal{C}$ be the class of all finitely generated projective $R$-modules. The set $V(\mathcal{C})$ of isomorphism classes of $\mathcal{C}$ is a Krull monoid under the operation $[A] \cdot [B] := [A \oplus B]$ (see \cite{FH00}).
%	
%	If instead $R$ is a commutative local Noetherian ring and $\mathcal{C}$ is the class of all finitely generated $R$-modules, then $V(\mathcal{C})$ is also a Krull monoid under the operation defined in the above paragraph (see \cite{rW01}).
\end{enumerate}

There are also many monoids that fail to be transfer Krull. Examples of non-transfer Krull monoids in a non-commutative setting are provided by \cite[Proposition~4.11]{FT17}, \cite[Corollary~4.4]{GS17}, and \cite[Theorem~1.2]{dS13}. On the other hand, Theorem~\ref{thm:Puiseux monoids are not transfer Krull} and the next proposition (which follows from \cite[Theorem~5.5]{GSZ17}) yield examples of non-transfer Krull monoids in a commutative context.

\begin{prop} \label{prop:numerical monoids are not transfer Krull}
	Every proper numerical monoid fails to be transfer Krull.
\end{prop}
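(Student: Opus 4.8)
The plan is to reduce the statement to a purely arithmetical assertion about systems of sets of lengths and then to invoke the classification obtained in \cite[Theorem~5.5]{GSZ17}. Suppose, for the sake of contradiction, that a proper numerical monoid $N$ is transfer Krull, i.e., that there exist an abelian group $G$, a subset $G_0 \subseteq G$, and a transfer homomorphism $\theta \colon N \to \mathcal{B}(G_0)$. Since numerical monoids are atomic and $\mathcal{B}(G_0)$ is atomic (being a Krull monoid), part~(4) of Proposition~\ref{prop:transfer properties} applies and yields the equality of systems of sets of lengths
\[
	\mathcal{L}(N) = \mathcal{L}(\mathcal{B}(G_0)).
\]
Thus it suffices to show that the system of sets of lengths of a proper numerical monoid cannot coincide with that of any block monoid $\mathcal{B}(G_0)$.

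Before appealing to the general classification, I would record the elementary necessary condition that $\mathcal{L}(N)$ is genuinely nontrivial. Since $N$ is proper and $\gcd \mathcal{A}(N) = 1$, it has at least two atoms; pick $a_1, a_2 \in \mathcal{A}(N)$ with $a_1 \neq a_2$. Then the element $a_1 a_2 \in N$ can be written both as $a_2$ summands equal to $a_1$ and as $a_1$ summands equal to $a_2$, so $\{a_1, a_2\} \subseteq \mathsf{L}(a_1 a_2)$ and $N$ is not half-factorial. In particular $\mathcal{B}(G_0)$ cannot be half-factorial and hence cannot be free abelian. This rules out the degenerate possibilities but, crucially, does not settle the claim: the coarser arithmetical invariants are provably insufficient. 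For instance, $\Delta(\langle 2,3\rangle) = \{1\} = \Delta(\mathcal{B}(\zz_3))$ and $\mathcal{B}(\zz_3)$ is a genuinely non–half-factorial block monoid, so the set of distances (and, similarly, the elasticity) cannot separate $\mathcal{L}(N)$ from every $\mathcal{L}(\mathcal{B}(G_0))$.

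This is exactly where the main obstacle lies, and why a finer analysis of the entire system $\mathcal{L}(N)$ is unavoidable rather than a comparison of single invariants. The resolution is supplied by \cite[Theorem~5.5]{GSZ17}, whose analysis of the systems of sets of lengths of numerical monoids shows that such a system matches the system $\mathcal{L}(\mathcal{B}(G_0))$ of a block monoid only in the degenerate case; the finiteness of $\rho(N)$ and of $\Delta(N)$ provides precisely the boundedness input that allows the comparison to be carried out at the level of block monoids. Applying that theorem to the displayed equality forces $N = \nn_0$, contradicting that $N$ is proper, and we conclude that no proper numerical monoid is transfer Krull. The heart of the argument is therefore the cited classification; the contribution of the present reduction is to translate \emph{transfer Krull} into the single equality $\mathcal{L}(N) = \mathcal{L}(\mathcal{B}(G_0))$ via Proposition~\ref{prop:transfer properties}, together with the observation that no elementary invariant suffices.
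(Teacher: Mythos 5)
Your proof is correct and takes essentially the same route as the paper, which offers no independent argument but simply states that the proposition follows from \cite[Theorem~5.5]{GSZ17}. Your added reduction---using Proposition~\ref{prop:transfer properties}(4) to translate the existence of a transfer homomorphism $\theta \colon N \to \mathcal{B}(G_0)$ into the single equality $\mathcal{L}(N) = \mathcal{L}(\mathcal{B}(G_0))$ before invoking that classification---is exactly the intended use of the citation, and your surrounding observations (non-half-factoriality, insufficiency of coarser invariants) are accurate though not logically necessary.
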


The next lemma will be used in the proof of Theorem~\ref{thm:Puiseux monoids are not transfer Krull}.

\begin{lemma} \label{lem:increasingly generated affine semigroups}
	If $\{a_n\}$ is an infinite sequence of positive integers, then there exists $m \in \nn$ such that $a_{m+1} \in \langle a_1, \dots, a_m \rangle$. 
\end{lemma}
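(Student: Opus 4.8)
The plan is to work inside the additive monoid $(\nn_0,+)$ and to exploit that finitely generated submonoids there are, up to rescaling by a common divisor, numerical monoids. Set $S := \langle a_n \mid n \in \nn \rangle$, viewed as a submonoid of $\nn_0$. The whole point is that although $S$ is presented by infinitely many generators, it is in fact finitely generated; hence from some index onward the terms of the sequence contribute nothing new, and that is exactly what forces $a_{m+1}$ to already lie in $\langle a_1, \dots, a_m \rangle$.

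First I would dispose of the common divisor. Let $d := \gcd\{a_n \mid n \in \nn\}$; since the gcd of a set of positive integers equals the gcd of finitely many of its members, $d$ is already realized by some finite subsequence. Dividing through by $d$, the monoid $T := \langle a_n/d \mid n \in \nn\rangle$ is a submonoid of $\nn_0$ whose generators have gcd $1$. A standard argument—choose finitely many $a_{n_1}/d, \dots, a_{n_k}/d$ with gcd $1$ and use that $\langle a_{n_1}/d, \dots, a_{n_k}/d\rangle$ is cofinite in $\nn_0$—shows that $T$ itself is cofinite, i.e. $T$ is a numerical monoid.

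Now I would invoke the structural fact recalled in Section~\ref{sec:background}: every numerical monoid has a unique finite minimal generating set, namely its set of atoms $\mathcal{A}(T)$. Since $T$ is reduced and atomic, each atom must appear in \emph{every} generating set of $T$; in particular $\mathcal{A}(T) \subseteq \{a_n/d \mid n \in \nn\}$. Because $\mathcal{A}(T)$ is finite, there is an $m \in \nn$ with $\mathcal{A}(T) \subseteq \{a_1/d, \dots, a_m/d\}$. Then $\langle a_1/d, \dots, a_m/d\rangle = \langle \mathcal{A}(T)\rangle = T$, so $a_{m+1}/d \in T = \langle a_1/d, \dots, a_m/d\rangle$, and multiplying back through by $d$ gives $a_{m+1} \in \langle a_1, \dots, a_m\rangle$, as desired.

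The only genuinely substantive point—and hence the step I expect to require the most care—is the finite generation: that a numerical monoid (more generally, any submonoid of $\nn_0$) is finitely generated, together with the observation that its atoms lie inside any generating set. Once this is secured, the conclusion is immediate from the finiteness of $\mathcal{A}(T)$. Everything else (the reduction by $d$ and the translation of conclusions between $S$ and $T$) is routine bookkeeping.
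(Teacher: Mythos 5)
Your proposal is correct, but it takes a genuinely different route from the paper's proof. You argue globally: the full monoid $S = \langle a_n \mid n \in \nn \rangle$, after dividing out $d = \gcd\{a_n \mid n \in \nn\}$, is a numerical monoid $T$, and you then invoke the structural fact (recalled in Section~\ref{sec:background}) that a numerical monoid has a unique finite minimal generating set, namely $\mathcal{A}(T)$, which must sit inside \emph{any} generating set; finiteness of $\mathcal{A}(T)$ then places it inside $\{a_1/d, \dots, a_m/d\}$ for some $m$, and the conclusion drops out. Note that your argument actually proves more than the lemma asks: $a_n \in \langle a_1, \dots, a_m \rangle$ for \emph{every} $n > m$, not just $n = m+1$. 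The paper argues locally instead: it splits into the bounded case (a repeated term) and the unbounded case, where it extracts a rapidly growing subsequence with $a_{n_{j+1}} > \prod_{i=1}^{j} a_{n_i}$, uses stabilization of $d_j = \gcd(a_{n_1}, \dots, a_{n_j})$ to find $k$ with $d_k \mid a_{n_{k+1}}$, and then uses a Frobenius-number bound to force $a_{n_{k+1}} \in \langle a_{n_1}, \dots, a_{n_k} \rangle$, finishing with $m = n_{k+1}-1$. What your approach buys is brevity, a stronger conclusion, and no case analysis, at the cost of relying on finite generation and uniqueness of minimal generating sets for numerical monoids; the paper's argument is more self-contained, needing only the cofiniteness of coprimely generated submonoids of $\nn_0$ (existence of Frobenius numbers) together with elementary gcd bookkeeping. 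Both proofs share the gcd-stabilization idea, just applied to different sequences of terms.
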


\begin{proof}
	If $\{a_n\}$ is bounded there is a term that repeats infinitely many times, making the conclusion of the lemma obvious. Thus, suppose that $\{a_n\}$ is not bounded. Let $\{a_{n_j}\}$ be a subsequence of $\{a_n\}$ satisfying that
	\begin{equation} \label{eq:lemma used in transfer Krull theorem}
		a_{n_{j+1}} > \prod_{i=1}^{j} a_{n_i}
	\end{equation}
	for every $j \in \nn$. Now, for each natural number $j$, set $d_j = \gcd(a_{n_1}, \dots, a_{n_j})$, and notice that $d_{j+1} \mid d_j$ for every $j \in \nn$. Therefore $d_{k+1} = d_k$ must hold for some $k$. In particular, $d_k \mid a_{n_{k+1}}$. On the other hand, condition \eqref{eq:lemma used in transfer Krull theorem} ensures that $a_{n_{k+1}}/d_k$ is greater than the Frobenius number of the numerical monoid $\langle a_{n_1}/d_k, \dots, a_{n_k}/d_k \rangle$. This implies that $a_{n_{k+1}} \in \langle a_{n_1}, \dots, a_{n_k} \rangle$. The lemma follows by taking $m = n_{k+1}-1$.
\end{proof}

Now we are in a position to prove that atomic Puiseux monoids are almost never transfer Krull.

\begin{theorem} \label{thm:Puiseux monoids are not transfer Krull}
	If a nontrivial Puiseux monoid is transfer Krull, then it must be isomorphic to $(\nn_0,+)$.
\end{theorem}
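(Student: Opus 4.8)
The plan is to assume that $M$ is a nontrivial transfer Krull Puiseux monoid and derive that $M \cong (\nn_0, +)$. By Definition~\ref{def:transfer Krull}, there exist an abelian group $G$, a subset $G_0 \subseteq G$, and a transfer homomorphism $\theta \colon M \to \mathcal{B}(G_0)$. Since $M$ is a nontrivial Puiseux monoid, Proposition~\ref{prop:numerical monoids are not transfer Krull} already rules out $M$ being isomorphic to a \emph{proper} numerical monoid, so the heart of the matter is to show that $M$ \emph{is} isomorphic to a numerical monoid at all; once that is established, the only numerical monoid not excluded by Proposition~\ref{prop:numerical monoids are not transfer Krull} is $(\nn_0,+)$ itself, which finishes the argument. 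Thus the real content is: a transfer Krull Puiseux monoid must be isomorphic to a numerical monoid. Note that $\mathcal{B}(G_0)$ is reduced, so by the reduction remarks surrounding condition (T1') we may assume $M$ is reduced and $\theta^{-1}(1) = 1$ (in additive terms, $\theta^{-1}(0) = \{0\}$).

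First I would invoke Proposition~\ref{prop:transfer properties}: since $\mathcal{B}(G_0)$ is atomic (block monoids are Krull, hence atomic), $M$ is atomic, and $\theta$ maps $\mathcal{A}(M)$ bijectively onto (the image inside) $\mathcal{A}(\mathcal{B}(G_0))$ in the atom-to-atom sense of part~(1). The strategy is to show $M$ has only finitely many atoms, for then $M$ is finitely generated and hence (being an atomic Puiseux monoid with finitely many atoms, after scaling to clear denominators) isomorphic to a numerical monoid. Suppose for contradiction that $\mathcal{A}(M)$ is infinite; enumerate the atoms as a sequence $\{a_n\}$ of positive rationals. The idea is to transport Lemma~\ref{lem:increasingly generated affine semigroups} across $\theta$. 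Each atom $a_n$ maps to an atom $\theta(a_n) \in \mathcal{B}(G_0)$, i.e.\ a minimal zero-sum sequence. The plan is to extract from the atoms a positive-integer invariant — the natural candidate is the length $|\theta(a_n)|$ of the associated block — and apply Lemma~\ref{lem:increasingly generated affine semigroups} to the integer sequence $\{|\theta(a_n)|\}$ to obtain an index $m$ with $|\theta(a_{m+1})| \in \langle |\theta(a_1)|, \dots, |\theta(a_m)| \rangle$, which I would then leverage, via the length-preservation property (3) of Proposition~\ref{prop:transfer properties} together with (T2), to manufacture a nontrivial factorization of $a_{m+1}$ inside $M$, contradicting that $a_{m+1}$ is an atom.

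The hard part will be turning the combinatorial length relation into an actual factorization obstruction. Knowing that $|\theta(a_{m+1})|$ lies in the numerical monoid generated by the earlier lengths does not by itself produce a zero-sum subsequence relation, because blocks of equal length need not divide one another in $\mathcal{F}(G)$. So I expect the genuine obstacle is bridging the gap between the additive length invariant and divisibility of blocks. The most promising fix is to work not with raw lengths but with a finer invariant that does control divisibility — for instance, to use the fact that $M$ is a BF-monoid (Proposition~\ref{prop:transfer properties}(4)) together with the Puiseux structure to bound denominators, and to exploit that in a Puiseux monoid any infinite set of atoms forces $0$ to be a limit point or forces unbounded denominators, either of which can be fed into a divisibility relation among the blocks $\theta(a_n)$. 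Concretely, I would argue that infinitely many atoms in $M$ produce, through $\theta$, infinitely many minimal zero-sum sequences whose supports or multiplicities must eventually nest; combined with Lemma~\ref{lem:increasingly generated affine semigroups} applied to an appropriate integer invariant (numerators of the $a_n$, say), this yields a block that properly divides another, and pulling back through (T2) breaks the atomicity of the larger atom. Finishing the proof then reduces to confirming $M$ is finitely generated, whence isomorphic to a numerical monoid, and applying Proposition~\ref{prop:numerical monoids are not transfer Krull} to collapse the possibilities to $(\nn_0,+)$.
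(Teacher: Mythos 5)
Your high-level skeleton does match the paper's: reduce to showing that a transfer Krull Puiseux monoid must be isomorphic to a numerical monoid, assume $\mathcal{A}(M)$ is infinite, apply Lemma~\ref{lem:increasingly generated affine semigroups} to an integer invariant of the blocks $\theta(a_n)$, and contradict the fact that $\theta(a_{m+1})$ is an atom of $\mathcal{B}(G_0)$; then invoke Proposition~\ref{prop:numerical monoids are not transfer Krull} to collapse to $(\nn_0,+)$. But there is a genuine gap exactly where you flag it, and none of the fixes you sketch closes it. A relation $|\theta(a_{m+1})| \in \langle |\theta(a_1)|, \dots, |\theta(a_m)| \rangle$ among lengths (or among numerators of the $a_n$) yields no divisibility among blocks, and the appeals to bounded denominators, to $0$ being a limit point, or to supports ``eventually nesting'' never produce a mechanism that converts an additive relation among integers into an actual factorization in $\mathcal{F}(G_0)$. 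As written, the contradiction is never reached.

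The missing idea is commensurability in a Puiseux monoid: any $r,s \in M^\bullet$ satisfy $mr = ns$ for some $m,n \in \nn$, and applying $\theta$ together with the valuations $\mathsf{v}_g$ to such relations supplies everything at once. First, if $\theta(r) = \theta(s) = X \neq \emptyset$, then $m\,\mathsf{v}_g(X) = n\,\mathsf{v}_g(X)$ for all $g$ forces $m = n$ and hence $r = s$, so $\theta$ is injective. Second, the same computation shows that all blocks in $\theta(M^\bullet)$ have the same support, so one may assume $G_0 = \{g_1, \dots, g_t\}$ is finite. Third---and this is the crux---for atoms $a_i, a_j$ with $c_i a_i = c_j a_j$ one gets $c_i\,\mathsf{v}_{g_n}(A_i) = c_j\,\mathsf{v}_{g_n}(A_j)$ for every $n$, where $A_i = \theta(a_i)$: the multiplicity vectors of the blocks are pairwise proportional, with a ratio $c_j/c_i$ that does not depend on the coordinate $g_n$. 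Now apply Lemma~\ref{lem:increasingly generated affine semigroups} to the single integer sequence $\{\mathsf{v}_{g_1}(A_n)\}$ to obtain $\mathsf{v}_{g_1}(A_{m+1}) = \sum_{i=1}^m \alpha_i\,\mathsf{v}_{g_1}(A_i)$; proportionality propagates this identity, with the same $\alpha_i$, to every coordinate $g_j$, whence $A_{m+1} = \prod_{i=1}^m A_i^{\alpha_i}$ in $\mathcal{F}(G_0)$, contradicting that $A_{m+1}$ is an atom of $\mathcal{B}(G_0)$. (Once proportionality is available, your length invariant would work just as well as $\mathsf{v}_{g_1}$; without it, no single scalar invariant can control all coordinates simultaneously.) This commensurability-to-proportionality step is the heart of the paper's proof and is what your proposal lacks.
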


\begin{proof}
	Let $M$ be a nontrivial Puiseux monoid that happens to be transfer Krull. As Krull monoids are atomic, $M$ is atomic by Proposition~\ref{prop:transfer properties}. Let $G$ be an abelian group, and let $\theta \colon M \to \mathcal{B}(G_0)$ be a transfer homomorphism, where $G_0$ is a subset of $G$. Because both $M$ and $\mathcal{B}(G_0)$ are reduced, $\theta^{-1}(\emptyset) = \{0\}$. Assume, by way of contradiction, that $M$ is not isomorphic to a numerical monoid. Take $X \in \mathcal{B}(G_0)^\bullet$ and $r,s \in M^\bullet$ such that $\theta(r) = \theta(s) = X$. Taking $m,n \in \nn$ such that $mr = ns$, one obtains
	\begin{equation} \label{eq:transfer Krull}
		\prod_{g \in G_0} g^{m \mathsf{v}_g(X)} = \theta(r)^m = \theta(s)^n = \prod_{g \in G_0} g^{n \mathsf{v}_g(X)}.
	\end{equation}
	Since $|X| \ge 1$ and $m \mathsf{v}_g(X) = n \mathsf{v}_g(X)$ for every $g \in G_0$, it follows that $m=n$, which yields $r = s$. Hence the preimage under $\theta$ of each element of $\mathcal{B}(G_0)^\bullet$ is a singleton. This, along with the fact that $\theta^{-1}(\emptyset) = \{0\}$, implies that $\theta$ is injective. %As a result, $|\mathcal{A}(M)| = \infty$ forces $|\mathcal{A}(\mathcal{B}(G_0))| = \infty$. Thus, $G$ must be an infinite group.
	In addition, the same equality \eqref{eq:transfer Krull} implies that
	\[
		\text{supp}_{G_0}(\theta(a)) = \text{supp}_{G_0}(\theta(a'))
	\]
	for all $a,a' \in \mathcal{A}(M)$. As a consequence, any two elements of $\theta(M^\bullet)$ have the same support, and we can assume, without loss of generality, that $G_0$ is finite. Let $G_0 =: \{g_1, \dots, g_t\}$ be the common support. List the set $\mathcal{A}(M)$ as a sequence $\{a_n\}$, and let $A_n = \theta(a_n)$ for each $n \in \nn$. Because $\theta$ is injective, $A_i \neq A_j$ when $i \neq j$. Now, for any pair $(i,j) \in \nn^2$, there exist $c_i,c_j \in \nn$ such that $c_i a_i = c_j a_j$. For each $n \in \{1, \dots, t\}$, we can apply $\mathsf{v}_{g_n} \circ \theta$ to the equality $c_i a_i = c_j a_j$ to get $c_i \mathsf{v}_{g_n}(A_i) = c_j \mathsf{v}_{g_n}(A_j)$. After rewriting this equality, one obtains that 
	\begin{equation} \label{eq:main theorem eq2}
		\frac{\mathsf{v}_{g_n}(A_i)}{\mathsf{v}_{g_n}(A_j)} = \frac{c_j}{c_i} = \frac{\mathsf{v}_{g_1}(A_i)}{\mathsf{v}_{g_1}(A_j)}
	\end{equation}
	for each $n \in \{1, \dots, t\}$. On the other hand, notice that Lemma~\ref{lem:increasingly generated affine semigroups} guarantees the existence of $m \in \nn$ and $\alpha_1, \dots, \alpha_m \in \nn_0$ such that
	\begin{equation} \label{eq:main theorem eq3}
		\mathsf{v}_{g_1}(A_{m+1}) = \sum_{i=1}^m \alpha_i \mathsf{v}_{g_1}(A_i).
	\end{equation}
	By \eqref{eq:main theorem eq2}, it follows that the equality \eqref{eq:main theorem eq3} holds when we replace $g_1$ by any other element of $G_0$ (exactly with the same $\alpha_i$'s). As a result, we obtain
	\[
		A_{m+1} = \prod_{j=1}^{|G_0|} g_j^{\mathsf{v}_{g_j}(A_{m+1})} = \prod_{j=1}^{|G_0|} \prod_{i=1}^m g_j^{\alpha_i \mathsf{v}_{g_j}(A_i)} = \prod_{i=1}^m \bigg(\prod_{j=1}^{|G_0|} g_j^{\mathsf{v}_{g_j}(A_i)}\bigg)^{\alpha_i} = \prod_{i=1}^m A_i^{\alpha_i}.
	\]
	This contradicts the fact that $A_{m+1}$ is an atom of the block monoid $\mathcal{B}(G_0)$. Therefore $M$ must be isomorphic to a numerical monoid. Now the direct implication of the proof follows by Proposition~\ref{prop:numerical monoids are not transfer Krull}. For the reverse implication, it suffices to notice that $\theta \colon 1 \mapsto [1_G]$ is an isomorphism from $(\nn_0,+)$ to the block monoid $\mathcal{B}(G)$, where $G$ is the trivial group.
\end{proof}

\begin{cor}
	A Puiseux monoid is a Krull monoid if and only if it can be generated by one element. \\
\end{cor}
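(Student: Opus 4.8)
The plan is to derive the corollary from Theorem~\ref{thm:Puiseux monoids are not transfer Krull} by passing through the notion of a transfer Krull monoid. For the forward implication, suppose $M$ is a Puiseux monoid that is a Krull monoid. If $M$ is trivial, then $M = \langle 0 \rangle$ and we are done, so assume $M$ is nontrivial. The key input is the standard structure theory of Krull monoids: the canonical block homomorphism of $M$ onto the block monoid $\mathcal{B}(G_0)$, where $G = \text{Cl}(M)$ is its class group and $G_0 \subseteq G$ is the set of classes containing prime divisors, is a transfer homomorphism (this is the content of \cite[Theorem~3.4.10]{GH06b}, which also underlies Proposition~\ref{prop:relation of Krull monoids and block monoids}). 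Hence $M$ is transfer Krull in the sense of Definition~\ref{def:transfer Krull}, and Theorem~\ref{thm:Puiseux monoids are not transfer Krull} forces $M \cong (\nn_0,+)$.

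Having $M \cong (\nn_0,+)$, I would next promote this abstract isomorphism to an explicit single generator. Since $\nn_0$ and $M$ are both Puiseux monoids, the isomorphism $(\nn_0,+) \to M$ is, by Proposition~\ref{prop:homomorphisms between PM}(1), multiplication by some $q \in \qq_{\ge 0}$; as an isomorphism is nontrivial, in fact $q \in \qq_{>0}$. Therefore $M = q\nn_0 = \langle q \rangle$, which is generated by the single element $q$, completing the forward direction.

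For the converse, suppose $M$ is generated by one element, say $M = \langle q \rangle$ for some $q \in \qq_{\ge 0}$. If $q = 0$, then $M = \{0\}$ is trivially a Krull monoid. If $q > 0$, then $M = q\nn_0 \cong (\nn_0,+)$ is the free abelian monoid on a single generator, which is Krull: taking $D = M$ and $\varphi = \mathrm{id}_M$ in Definition~\ref{def:Krull monoid} satisfies both conditions simultaneously. This settles the converse and the biconditional.

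The main obstacle is making precise the step \emph{Krull implies transfer Krull}, i.e., the existence and transfer property of the block homomorphism; this is not proved within the present excerpt but is a cornerstone of the structure theory of Krull monoids, so once it is cited the remainder is routine. The only other care needed is bookkeeping at the boundary cases, namely the trivial monoid and the value $q = 0$, to ensure both ends of the equivalence account for $\{0\} = \langle 0 \rangle$.
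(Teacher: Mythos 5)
Your proof is correct and follows the paper's intended route: the corollary is stated without proof as an immediate consequence of Theorem~\ref{thm:Puiseux monoids are not transfer Krull}, resting on exactly the fact you cite---that every Krull monoid admits a transfer homomorphism onto a block monoid $\mathcal{B}(G_0)$ \cite[Theorem~3.4.10]{GH06b}, so a Krull Puiseux monoid is transfer Krull---together with the easy converse that $\langle q \rangle \cong (\nn_0,+)$ (or $\{0\}$) is Krull. Your extra bookkeeping, using Proposition~\ref{prop:homomorphisms between PM}(1) to turn the abstract isomorphism into an explicit single generator and handling the trivial case, is sound and needs no changes.
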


Perhaps the second most-systematically studied family of atomic monoids is that one comprising the C-monoids. We would like to know under which conditions a Puiseux monoid happens to be a C-monoid.

Any monoid $M$ can be embedded into a quotient group $\mathsf{g}(M)$, which is unique up to canonical isomorphism. Let $D$ be a multiplicative monoid with quotient group $\mathsf{g}(D)$, and let $M$ be a submonoid of $D$. Two elements $x,y \in D$ are said to be $M$-\emph{equivalent} provided that $x^{-1}M \cap D = y^{-1}M \cap D$. It can be easily checked that being $M$-equivalent defines a congruence relation on $D$. For each $x \in D$, let $[x]^D_M$ denote the congruence class of $x$. The set
\[
	\mathcal{C}^*(M,D) := \big\{[x]^D_M \mid x \in (D \setminus D^\times) \cup \{1\}\big\}
\]
is a commutative semigroup with identity, which is called the \emph{reduced class semigroup} of $M$ in $D$.

\begin{definition} \label{def:C-monoid}
	A monoid $M$ is called a C-\emph{monoid} if it is a submonoid of a factorial monoid $F$ such that $F^\times \cap M = M^\times$ and $\mathcal{C}^*(M,F)$ is finite.
\end{definition}

With notation as in Definition~\ref{def:C-monoid}, we say that $M$ is a C-monoid \emph{defined in} $F$. A C-monoid can be defined in more than one factorial monoid $F$; however, there is a canonical way of choosing $F$ (see \cite[Theorem~5.6.A.3]{GH06a}). Because C-monoids are submonoids of factorial monoids, they are atomic. The family of C-monoids allows us to study the arithmetic of non-integrally closed Noetherian domains.

Given a multiplicative monoid $M$ with quotient group $\mathsf{g}(M)$, we say that $x \in \mathsf{g}(M)$ is \emph{almost integral} over $M$ if there exists $c \in M$ such that $cx^n \in M$ for every $n \in \nn$. The subset of $\mathsf{g}(M)$ consisting of all almost integral elements over $M$ is denoted by $\widehat{M}$ and called the \emph{complete integral closure} of $M$.

Let $R$ be an integral domain with field of fractions $\mathsf{q}(R)$. An ideal $I$ of $R$ is \emph{divisorial} if $(R : (R : I)) = I$. The domain $R$ is called a \emph{Mori domain} if it satisfies the ascending chain condition on divisorial ideals. Finally, for the domain $R$ we set $\widehat{R} = \widehat{R^\bullet} \cup \{0\}$, where $R^\bullet$ is the multiplicative monoid of $R$.

\begin{example}
	If $A$ is a Mori domain, then $R = \widehat{A}$ is a Krull domain. Moreover, if $\mathfrak{f} = (A : R)$ is nonzero and both the quotient ring $R/\mathfrak f$ and the class group $\mathcal C (R)$ are finite, then $A^\bullet$ is a C-monoid (see \cite[Theorem 2.11.9]{GH06b}). More examples of C-monoids can be found in \cite{GRR15} and \cite{aR13}.
\end{example}

The next theorem is used in the proof of Proposition~\ref{prop:PM are almost never transfer C-monoids}.

\begin{theorem} \cite[Theorem 2.9.11(2)]{GH06b} \label{thm:complete integral closures of C-monoids are Krull monoids}
	The complete integral closure of a \emph{C}-monoid is a Krull monoid.
\end{theorem}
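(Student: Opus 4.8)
The plan is to realize $\widehat{M}$ as a saturated submonoid of the factorial monoid $F$ in which the C-monoid $M$ is defined, and then to invoke the standard structural fact that a saturated submonoid of a factorial (indeed of any Krull) monoid is again Krull. Throughout I may assume that $M$ is reduced: the excerpt records that $K$ is Krull exactly when $K_{\text{red}}$ is, and complete integral closure is compatible with passage to the reduced monoid. So fix a factorial monoid $F$ with $F^\times \cap M = M^\times$ and with $\mathcal{C}^*(M,F)$ finite, and write $F = F^\times \times \mathcal{F}(P)$.

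First I would verify the containment $\widehat{M} \subseteq F$. If $x \in \widehat{M} \subseteq \mathsf{g}(M) \subseteq \mathsf{g}(F)$, then there is $c \in M \subseteq F$ with $c x^n \in M \subseteq F$ for all $n \in \nn$, so $x$ is almost integral over $F$. Since a factorial monoid is completely integrally closed, $\widehat{F} = F$, whence $x \in F$. Thus $\widehat{M}$ is a submonoid of $F$, and it remains only to show that this submonoid is \emph{saturated}, i.e.\ that $\widehat{M} = \mathsf{g}(\widehat{M}) \cap F$.

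This saturation step is the main obstacle, and it is the only place where the defining finiteness of the C-monoid is used. Given $x \in \mathsf{g}(\widehat{M}) \cap F$, I would write $x = y/z$ with $y,z \in \widehat{M} \subseteq F$ and then manufacture a single $c \in M$ with $c x^n \in M$ for every $n$. The key leverage is that $M$-equivalence is a congruence on $F$ (as noted in the construction of $\mathcal{C}^*(M,F)$), so $a \mapsto [a]^F_M$ is a monoid homomorphism from $F$ into the \emph{finite} semigroup $\mathcal{C}^*(M,F)$. Consequently the sequence of classes $\{[x^n]^F_M\}_{n \ge 1}$ is eventually periodic, and its range contains an idempotent class $e = [x^m]^F_M$, so that $x^m$ and $x^{2m}$ are $M$-equivalent. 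The technical heart of the argument is to combine this idempotent with the conductor elements supplied by $y,z \in \widehat{M}$ in order to absorb the denominators $z^n$ uniformly and extract the desired $c$; this periodicity-plus-idempotent computation inside the finite class semigroup is where I expect the real work to lie.

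Once saturation is established, $\widehat{M}$ is a saturated submonoid of the factorial --- hence Krull --- monoid $F$, so the inclusion $\widehat{M} \hookrightarrow F$ is a divisor homomorphism into a free abelian monoid. After the standard reduction to the subgroup generated by its image and passage to greatest common divisors, this inclusion realizes the gcd condition~(2) of Definition~\ref{def:Krull monoid}, so that $\widehat{M}$ satisfies the excerpt's definition of a Krull monoid, completing the proposed argument.
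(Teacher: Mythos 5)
There is no proof in the paper to compare yours against: the statement is imported verbatim from \cite[Theorem~2.9.11(2)]{GH06b} and used as a black box in the proof of Proposition~\ref{prop:PM are almost never transfer C-monoids}, so your proposal must stand on its own. As written, it does not, because its only nontrivial step is deferred rather than proved. The outer reduction is fine: $\widehat{M} \subseteq F$ holds since a factorial monoid is completely integrally closed, and a saturated submonoid of a factorial monoid is Krull (the passage from a divisor homomorphism into $\mathcal{F}(P)$ to an actual divisor theory, giving condition~(2) of Definition~\ref{def:Krull monoid}, is indeed standard). But after this reduction the entire content of the theorem sits in the saturation claim $\widehat{M} = \mathsf{g}(\widehat{M}) \cap F$, and there you only announce a ``periodicity-plus-idempotent computation'' and state that this is where you expect the real work to lie. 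A proof whose single essential step is left as an expectation is a plan, not a proof: the finiteness of $\mathcal{C}^*(M,F)$, the one hypothesis that distinguishes C-monoids, is never actually brought to bear.

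Beyond the deferral, the sketch has two concrete defects. First, the C-monoid hypothesis makes only the \emph{reduced} class semigroup finite, i.e.\ the classes $[x]^F_M$ with $x \in (F \setminus F^\times) \cup \{1\}$; hence the sequence $\{[x^n]^F_M\}_{n \ge 1}$ is guaranteed to range over a finite set only when $x \notin F^\times$. For $x \in \mathsf{g}(M) \cap F^\times$ your periodicity argument has no finiteness to feed on, and this unit case needs separate treatment (controlling $F^\times$ is a genuine part of the difficulty in \cite{GH06b}). Second, writing $x = y/z$ with $y,z \in \widehat{M}$ is the wrong normalization: to witness almost integrality you must multiply into $M$, and powers of $z \in \widehat{M}$ need not lie in $M$; since $\mathsf{g}(\widehat{M}) = \mathsf{g}(M)$, you should take $x = ab^{-1}$ with $a,b \in M$. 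With these repairs your strategy does close up. For $x \in F \setminus F^\times$, pick $n < m$ with $[x^n]^F_M = [x^m]^F_M$, so $[x^j]^F_M$ is periodic with period $d = m-n$ for $j \ge n$; since $b^{j'} \in (x^{j'})^{-1}M \cap F$ for every $j'$, periodicity gives $x^j b^{j'} \in M$ whenever $j,j' \ge n$ are congruent modulo $d$, whence $b^m x^j \in M$ for all $j \ge 1$, i.e.\ $x \in \widehat{M}$. For $x \in F^\times$, one may assume $a,b \notin F^\times$ (otherwise $x \in M^\times$); pick $n < m$ with $[b^n]^F_M = [b^m]^F_M$ and set $d = m - n$; then $a^{n+kd} = x^{n+kd}b^{n+kd} \in M$ together with $[b^{n+kd}]^F_M = [b^n]^F_M$ yields $a^n x^{kd} = b^n x^{n+kd} \in M$ for all $k \ge 0$, so $x^d \in \widehat{M}$, and $c = a^n b^{d-1}$ then witnesses $cx^j \in M$ for every $j \ge 1$ upon writing $j = qd + r$ with $0 \le r < d$. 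So your instinct about where the work lies was accurate; the gap is that the proposal stops exactly there.
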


\begin{prop} \label{prop:PM are almost never transfer C-monoids}
	A nontrivial Puiseux monoid is a \emph{C}-monoid if and only if it is isomorphic to a numerical monoid.
\end{prop}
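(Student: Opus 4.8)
The plan is to prove the biconditional by establishing the nontrivial forward direction and then handling the (essentially trivial) reverse direction. For the reverse direction, if $M$ is isomorphic to a numerical monoid then it is finitely generated, and a numerical monoid is a well-known example of a $C$-monoid: a numerical monoid $N$ sits inside the factorial monoid $(\nn_0,+)$ (written additively), and one checks that the reduced class semigroup $\mathcal{C}^*(N,\nn_0)$ is finite because $\nn_0 \setminus N$ is finite, so only finitely many congruence classes can arise. Thus every numerical monoid is a $C$-monoid, and this property is preserved under isomorphism.

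The heart of the argument is the forward direction: if a nontrivial Puiseux monoid $M$ is a $C$-monoid, then $M$ is isomorphic to a numerical monoid. Here I would exploit the two major results already set up in the excerpt. First, by Theorem~\ref{thm:complete integral closures of C-monoids are Krull monoids}, the complete integral closure $\widehat{M}$ of a $C$-monoid is a Krull monoid. Second, by Theorem~\ref{thm:Puiseux monoids are not transfer Krull}, the only nontrivial transfer Krull Puiseux monoid is $\nn_0$. The strategy is therefore to connect being a $C$-monoid to being transfer Krull. The key structural fact I would invoke is that a $C$-monoid admits a transfer homomorphism onto a reduced Krull monoid; more precisely, the standard theory of $C$-monoids (as in \cite{GH06b}) provides that a $C$-monoid is a transfer Krull monoid, or at least that its arithmetic of lengths is controlled by a block monoid over a finite abelian group. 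If $M$ is a nontrivial Puiseux monoid that is a $C$-monoid, then $M$ is in particular transfer Krull, so Theorem~\ref{thm:Puiseux monoids are not transfer Krull} forces $M \cong (\nn_0,+)$, which is certainly a numerical monoid.

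The main obstacle I anticipate is making rigorous the claim that a $C$-monoid is transfer Krull, since the definitions in the excerpt do not state this directly. There are two routes around this. The cleaner route uses Theorem~\ref{thm:complete integral closures of C-monoids are Krull monoids} together with the observation that a Puiseux monoid is already a submonoid of $\qq_{\ge 0}$, hence its complete integral closure $\widehat{M}$ is a Puiseux monoid (or its quotient group is a subgroup of $\qq$, so $\widehat{M}$ is again realized inside $\qq_{\ge 0}$). Since $\widehat{M}$ is Krull and a Puiseux monoid, the Corollary following Theorem~\ref{thm:Puiseux monoids are not transfer Krull} shows $\widehat{M}$ is generated by a single element, i.e. $\widehat{M} \cong \nn_0$. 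A $C$-monoid whose complete integral closure is $\nn_0$ must be a submonoid of $\nn_0$ of finite index (by finiteness of the reduced class semigroup, the conductor $(\widehat{M} : M)$ is nonzero), hence a numerical monoid up to isomorphism. The alternate route appeals directly to the relationship between $C$-monoids and transfer Krull monoids recorded in the literature to conclude $M$ is transfer Krull and then applies Theorem~\ref{thm:Puiseux monoids are not transfer Krull}. Either way, the delicate point is the finiteness of the conductor forcing $M$ to be cofinite in its Krull completion $\nn_0$; verifying that $\widehat{M} \cong \nn_0$ and extracting cofiniteness of $M$ inside it is the step that requires care.
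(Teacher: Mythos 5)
Your ``cleaner route'' is essentially the paper's own proof: apply Theorem~\ref{thm:complete integral closures of C-monoids are Krull monoids} to see that $\widehat{M}$ is Krull, observe that $\widehat{M}$ is itself a Puiseux monoid, invoke Theorem~\ref{thm:Puiseux monoids are not transfer Krull} to get $\widehat{M} \cong (\nn_0,+)$, and conclude; the reverse direction via finiteness of $\mathcal{C}^*(N,\nn_0)$ is also the paper's argument. However, your alternate route is not salvageable and you should not present it even as a fallback: the claim that ``a C-monoid is a transfer Krull monoid, or at least its arithmetic of lengths is controlled by a block monoid'' is false, and the present proposition itself witnesses this. Proper numerical monoids are C-monoids (that is exactly your reverse direction), yet they fail to be transfer Krull by Proposition~\ref{prop:numerical monoids are not transfer Krull}. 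Only the route through the complete integral closure works.

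Within the main route, two points need adjusting. First, there is a genuine (if small) gap where you assert parenthetically that $\widehat{M}$ is ``realized inside $\qq_{\ge 0}$'': since $\mathsf{g}(M)$ is a subgroup of $\qq$ containing negative numbers, one must rule out negative almost integral elements. The paper does this by noting that for $x \in \mathsf{g}(M) \cap \qq_{<0}$ and $r \in M$, the set $\{r + nx \mid n \in \nn\}$ contains only finitely many positive rationals, so $cx^n \in M$ for all $n$ (additively, $c + nx \in M$) is impossible; without this observation, Theorem~\ref{thm:complete integral closures of C-monoids are Krull monoids} alone does not place $\widehat{M}$ among Puiseux monoids. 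Second, your endgame is more complicated than necessary, and you misidentify where the care is needed: once $M$ embeds as a nontrivial submonoid of $\widehat{M} \cong \nn_0$, you are done, because every nontrivial submonoid of $(\nn_0,+)$ is isomorphic to a numerical monoid (divide out the gcd of its elements; the resulting submonoid of $\nn_0$ with gcd $1$ is cofinite). The statement only asks that $M$ be \emph{isomorphic} to a numerical monoid, so extracting cofiniteness of $M$ inside $\widehat{M}$ via a nonzero conductor is beside the point --- and incidentally the conductor you want there is $(M : \widehat{M})$, not $(\widehat{M} : M)$, the latter being trivially large.
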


\begin{proof}
	Let $M$ be a nontrivial Puiseux monoid that is also a C-monoid. Let $\widehat{M}$ be the complete integral closure of $M$. Observe first that if $x \in \mathsf{g}(M) \cap \qq_{<0}$, then
	\[
		S_{x,r} := \{r + nx \mid n \in \nn\}
	\]
	contains only finitely many positive rational numbers for all $r \in M$. As a result, $|S_{x,r} \cap M| < \infty$ for all $r \in M$, which implies that no negative element of $\mathsf{g}(M)$ is almost integral over $M$. Because $\widehat{M}$ is a monoid and it is contained in $\qq_{\ge 0}$, it must be a Puiseux monoid. By Theorem~\ref{thm:complete integral closures of C-monoids are Krull monoids}, the monoid $\widehat{M}$ is a Krull monoid; in particular, it is transfer Krull. Now Theorem~\ref{thm:Puiseux monoids are not transfer Krull} ensures that $\widehat{M}$ is isomorphic to $(\nn_0,+)$. Finally, the fact that $M$ is a submonoid of $\widehat{M}$ forces $M$ to be isomorphic to a numerical monoid.
	
	For the reverse implication, it suffices to note that for every proper numerical monoid $N$, any two natural numbers greater than the Frobenius number of $N$ are $N$-equivalent, which implies that $\mathcal{C}^*(N, \nn_0)$ is finite. As $\nn_0$ is also a C-monoid, the proof follows.
\end{proof}
\medskip

\section{Acknowledgements}

	While working on this paper, I was supported by the NSF-AGEP fellowship. I am grateful to Alfred Geroldinger not only for proposing the main questions motivating this paper but also for his enlightening guidance through early drafts. Also, I would like to thank Salvatore Tringali and the anonymous referee, whose helpful suggestions help me improve the final version of this paper.
	
\vspace{10pt}

\end{document}